      \theoremstyle{plain}
      \newtheorem{theorem}{Theorem}[section]
      \newtheorem{lemma}[theorem]{Lemma}
      \newtheorem{corollary}[theorem]{Corollary}
      \newtheorem{proposition}[theorem]{Proposition}
      \theoremstyle{definition}
      \theoremstyle{remark}
	\newcommand{\NN}{{\mathbb N}}
\def\ocn{\mbox{\rm odd-cr}}
\title{Disjoint edges in topological graphs\\ and the tangled-thrackle conjecture\thanks{Research on this paper began at the AIM workshop \emph{Exact Crossing Numbers (Palo Alto, CA, 2014)}.}}
\author{Andres J. Ruiz-Vargas\thanks{\'Ecole polytechnique f\'ed\'erale de Lausanne, Lausanne, Switzerland. Supported by Swiss National Science Foundation grant 200021-125287/1 and Swiss National
Science Foundation grant 200021-137574. 
Email: \texttt{andres.ruizvargas@epfl.ch}.}
\and
Andrew Suk\thanks{University of Illinois at Chicago, Chicago, IL, USA.  Supported by NSF grant DMS-1500153.  Email: {\tt suk@uic.edu}.}
\and
Csaba D. T\'oth\thanks{California State University Northridge, Los Angeles, CA, USA. Email: \texttt{cdtoth@acm.org}.}}
\begin{document}

\maketitle

\begin{abstract}
It is shown that for a constant $t\in \NN$, every simple topological graph on $n$ vertices has $O(n)$ edges if the graph has no two sets of $t$ edges such that every edge in one set is disjoint from all edges of the other set (i.e., the complement of the intersection graph of the edges is $K_{t,t}$-free). As an application, we settle the \emph{tangled-thrackle} conjecture formulated by Pach, Radoi\v{c}i\'c, and T\'oth:  Every $n$-vertex graph drawn in the plane such that every pair of edges have precisely one point in common, where this point is either a common endpoint, a crossing, or a point of tangency, has at most $O(n)$ edges.
\end{abstract}

\section{Introduction}

A \emph{topological graph} is a graph drawn in the plane such that its vertices are represented by distinct points and its edges are represented by Jordan arcs between the corresponding points
satisfying the following (nondegeneracy) conditions: (a) no edge intersects any vertex other
than its endpoints, (b) any two edges have only a finite number of interior points in common,
(c) no three edges have a common interior point, and (d) if two edges share an interior point, then they properly cross at that point~\cite{Pach07}. A topological graph is \emph{simple} if every pair of edges intersect in at most one point. Two edges of a topological graph \emph{cross} if their interiors share a point, and are \emph{disjoint} if they neither share a common vertex nor cross.

In 2005, Pach and T\'oth~\cite{pachtoth} conjectured that for every constant $t\geq 3$, an $n$-vertex simple topological graph has $O(n)$ edges if no $t$ edges are pairwise disjoint. They gave an upper bound of $|E(G)| \leq O(n\log^{4t-8} n)$ for all such graphs. Despite much attention over the last 10 years (see related results in \cite{fulek2,ps-ccmt-11,suk2,suk}), the conjecture is still open.

The condition that \emph{no $t$ edges are pairwise disjoint} means that the intersection graph of the edges (Jordan arcs) contains no anti-clique of size $t$, or equivalently the complement of the intersection graph of the edges is $K_t$-free. In this paper, we consider a stronger condition that the complement of the intersection graph of the edges is $K_{t,t}$-free, where $t\in \NN$ is a constant. This means that graph $G$ has no set of $t$ edges that are all disjoint from another set of $t$ edges. Since no such graph $G$ contains $2t$ pairwise disjoint edges, \cite{pachtoth} implies $|E(G)| \leq O(n\log^{8t-8}n)$. Our main result improves this upper bound to $O(n)$.

\begin{theorem}\label{thm:main}
Let $t\in \NN$ be a constant. The maximum number of edges in a simple topological graph with $n$ vertices that does not contain $t$ edges all disjoint from another set of $t$ edges is $O(n)$.
\end{theorem}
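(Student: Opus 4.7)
The plan is to prove the contrapositive by induction on $t$: there is a constant $C_t$ such that every simple topological graph $G$ on $n$ vertices with $m := |E(G)| \geq C_t n$ contains the forbidden configuration—two sets $A, B \subseteq E(G)$ with $|A| = |B| = t$ and every edge of $A$ disjoint from every edge of $B$ (equivalently, a $K_{t,t}$ in the disjointness graph of edges of $G$). The base case $t = 1$ is the classical no-disjoint-pair bound: a simple topological graph in which every two edges share an endpoint or cross has $O(n)$ edges.

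For the inductive step, the key reduction is the following. For any disjoint pair of edges $(e_1, e_2)$ in $G$, let $T(e_1, e_2)$ denote the set of edges of $G$ disjoint from both $e_1$ and $e_2$. If $G$ avoids the forbidden $K_{t,t}$, then the simple topological subgraph induced by $T(e_1, e_2)$ avoids the forbidden $K_{t-1, t-1}$: for any $A, B \subseteq T(e_1, e_2)$ of size $t-1$ with every edge of $A$ disjoint from every edge of $B$, the augmented sets $A \cup \{e_1\}$ and $B \cup \{e_2\}$ would form a $K_{t,t}$ in $G$, using that $e_1$ and $e_2$ are disjoint and each is disjoint from all edges in $T(e_1, e_2)$. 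By the inductive hypothesis applied to $G|_{T(e_1,e_2)}$ (which has at most $n$ vertices), we obtain $|T(e_1, e_2)| \leq C_{t-1} n$ for every disjoint pair.

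Summing over all disjoint pairs and switching the order of summation yields
\[ 3 \cdot (\text{number of triangles in the disjointness graph } D_G) \;=\; \sum_{\{e_1, e_2\}\text{ disjoint}} |T(e_1, e_2)| \;\leq\; D \cdot C_{t-1} n, \]
where $D = D(G)$ is the total number of disjoint pairs of edges. In the dense regime $D > m^2/4$, the Moon--Moser inequality gives $\Omega(m^3)$ triangles in $D_G$, and combining with the displayed bound forces $m = O(n)$, contradicting $m \geq C_t n$ for $C_t$ sufficiently large.

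The main obstacle is the sparse regime $D \leq m^2/4$, where $D_G$ is too sparse for Moon--Moser to force triangles, and at least three quarters of all pairs of edges in $G$ are non-disjoint—placing $G$ in a near-thrackle configuration. To close the proof here one must establish a quantitative thrackle-type bound: a simple topological graph on $n$ vertices in which almost all pairs of edges are non-disjoint has $O(n)$ edges. This crucial step likely requires combining the Pach--T\'oth polylogarithmic bound $O(n \log^{8t-8}n)$ (applicable in our setting via the weaker implication ``no $K_{t,t}$ in the complement $\Rightarrow$ no $2t$ pairwise disjoint edges'') with a careful bootstrapping or separator argument that peels off the polylogarithmic factor.
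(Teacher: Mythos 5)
Your induction on $t$ via the sets $T(e_1,e_2)$ is a clean reduction, and the identity $\sum_{\{e_1,e_2\}\ \text{disjoint}}|T(e_1,e_2)| = 3\cdot(\text{triangles in }D_G)$ is correct, but the argument has a genuine gap precisely where you flag "the main obstacle," and that gap is the whole theorem. First, a minor issue: Moon--Moser gives $\Omega(m^3)$ triangles only when $D$ exceeds $m^2/4$ by a constant multiplicative factor, not when $D>m^2/4$ bare. More importantly, the dense regime is vacuous under your own hypotheses: if the disjointness graph $D_G$ is $K_{t,t}$-free, then K\H{o}v\'ari--S\'os--Tur\'an already forces $D=O(m^{2-1/t})$, which is below $m^2/4$ once $m$ exceeds a constant depending only on $t$. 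So the entire problem lands in your sparse regime, which the proposal leaves open. The hope that $D\le m^2/4$ places $G$ in a "near-thrackle configuration" amenable to a direct thrackle-type bound is unfounded: a thrackle has $D=0$, and having (say) three quarters of the edge pairs intersecting is nowhere near that condition --- it does not even preclude $\Theta(\sqrt{D})=\Theta(m^{1-1/2t})$ pairwise disjoint edges, so no Lov\'asz--Pach--Szegedy-type argument applies. The paper's actual proof does not use your induction on $t$ at all; it handles exactly your sparse regime directly: it converts the subquadratic count $O(m^{2-1/t})$ of disjoint pairs into a bound on the odd-crossing number of $G$ via the Pach--T\'oth redrawing trick (after restricting to a bipartite subgraph), deduces a sublinear bisection width after a degree-reducing vertex-splitting step, and then runs a separator recursion with induction on the number of vertices $n$. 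That bootstrapping machinery --- which you correctly guess is needed to "peel off the polylogarithmic factor" --- is the core of the result, not a technicality that can be deferred; your reduction never engages with it because the sparse case absorbs the whole problem.
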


\paragraph{Application to thrackles.}
More than 50 years ago, Conway asked what is the maximum number of edges in an $n$-vertex \emph{thrackle}, that is, a simple topological graph $G$ in which every two edges intersect, either at a common endpoint or at a proper crossing~\cite{BMP05}. He conjectured that every $n$-vertex thrackle has at most $n$ edges. The first linear upper bound was obtained by Lov\'asz, Pach, and Szegedy~\cite{lovasz}, who showed that all such graphs have at most $2n$ edges. This upper bound was successively improved, and the current record is $|E(G)| \leq \frac{167}{117}n < 1.43n$ due to Fulek and Pach~\cite{fulek}.

As an application of Theorem~\ref{thm:main}, we prove the tangled-thrackle conjecture recently raised by Pach, Radoi\v{c}i\'c, and T\'oth~\cite{tangled}. A drawing of a graph $G$ is a \emph{tangled-thrackle} if it satisfies conditions (a)-(c) of topological graphs and every pair of edges have precisely one point in common: either a common endpoint, or a proper crossing, or a point of tangency. Note that such a drawing need not be a topological graph due to possible tangencies. Pach, Radoi\v{c}i\'c, and T\'oth~\cite{tangled} showed that every $n$-vertex tangled-thrackle has at most $O(n\log^{12}n)$ edges, and described a construction with at least $\lfloor 7n/6\rfloor$ edges. They conjectured that the upper bound can be improved to $O(n)$. Here, we settle this conjecture in the affirmative.

\begin{theorem}\label{thm:thrackle}
Every tangled-thrackle on $n$ vertices has $O(n)$ edges.
\end{theorem}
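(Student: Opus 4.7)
The plan is to convert the tangled-thrackle drawing $G$ into a simple topological graph $G'$ on the same vertex set and with the same abstract edge set, in such a way that every pair of edges still meets in a single point, and then invoke Theorem~\ref{thm:main}. The tangled-thrackle drawing already satisfies conditions (a)-(c) of a topological graph; only condition (d) can fail, and only at the tangency points. The natural remedy is to locally perturb each tangency into a proper crossing.

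Concretely, let $p$ be a tangency at which edges $e_1$ and $e_2$ touch. By condition (c) and the definition of a tangled-thrackle, $p$ is an interior point of only $e_1$ and $e_2$ and lies on no vertex. Since the remaining edges form a finite union of compact Jordan arcs that do not contain $p$, there is some $\epsilon_p > 0$ such that the open disk $B(p,\epsilon_p)$ meets only $e_1$ and $e_2$ among the edges, and such that inside this disk each of $e_1$, $e_2$ is a single subarc through $p$. I replace the portion of $e_2$ inside $B(p,\epsilon_p)$ by a nearby Jordan subarc with the same two boundary endpoints on $\partial B(p,\epsilon_p)$, chosen so that the new $e_2$ properly crosses $e_1$ at a single point in $B(p,\epsilon_p)$ instead of being tangent to it at $p$. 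Repeating this surgery at every tangency, using pairwise disjoint disks and generic perturbations so that no two perturbed arcs create a new triple point, produces a drawing $G'$ of the same abstract graph.

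By construction, $G'$ satisfies (a)-(d) of topological graphs, and every pair of edges of $G'$ still shares exactly one point: a common endpoint for adjacent pairs, the original proper crossing for crossing pairs, and the new proper crossing for pairs that used to be tangent. Hence $G'$ is a simple topological graph in which no two edges are disjoint, and in particular, for every $t \in \NN$, $G'$ contains no set of $t$ edges all disjoint from another set of $t$ edges. Theorem~\ref{thm:main} (applied, say, with $t = 2$) then gives $|E(G)| = |E(G')| = O(n)$.

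The only step requiring genuine care is the local perturbation: one must check that replacing $e_2$ inside $B(p,\epsilon_p)$ cannot create a new intersection with any third edge, which would violate simplicity. This is the main (though mild) obstacle, and it is resolved by choosing $\epsilon_p$ strictly smaller than both the positive distance from $p$ to every other edge and the pairwise distances between tangency points, of which there are only finitely many.
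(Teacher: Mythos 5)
Your plan has a genuine topological obstruction at the very step you flag as ``the only step requiring genuine care.'' You cannot perturb a tangency into a \emph{single} proper crossing by a local modification, and the reason is parity. Restrict attention to the disk $B(p,\epsilon_p)$: inside it, $e_1$ is a chord that separates the disk into two half-disks, and because $e_1$ and $e_2$ are \emph{tangent} (they touch at $p$ without crossing), the two boundary points of $e_2\cap B(p,\epsilon_p)$ on $\partial B(p,\epsilon_p)$ lie in the \emph{same} half. Any Jordan arc inside the disk joining these two boundary points must cross the chord $e_1\cap B(p,\epsilon_p)$ an \emph{even} number of times. So a local perturbation that keeps the endpoints on $\partial B(p,\epsilon_p)$ fixed can only convert the tangency into $0$ or $2$ (or more) crossings: $0$ crossings makes the pair disjoint, while $2$ crossings destroys simplicity. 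The single crossing you want is impossible. (Going non-locally around an endpoint of $e_1$ would change the parity, but then you have no control over new intersections with the remaining $O(n)$ edges, and the resulting drawing is no longer simple.)

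This is exactly why the paper perturbs tangencies to produce \emph{disjoint} pairs rather than crossing pairs (building $\tilde{G}$), consistent with the $0$-crossing branch of the parity dichotomy. But that choice has a cost: $\tilde G$ can now contain many disjoint pairs of edges, so the trivial application of Theorem~\ref{thm:main} with $t=2$ is unavailable. The paper compensates with Lemma~\ref{100e}, which shows $\tilde G$ contains no $200$ edges all disjoint from another $200$ edges. That lemma is the real content of the application: it exploits the fact that disjoint edges of $\tilde G$ come from \emph{tangent} edges of $G$, and bounds the complexity of mutual tangencies via face complexity in arrangements of pseudo-segments and Davenport--Schinzel bounds ($\lambda_3$). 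If you want to repair your argument, this is the missing ingredient: after making tangent pairs disjoint, you must still rule out a large bi-clique of disjointness, and that requires the pseudo-segment/DS-sequence machinery (Lemmas~\ref{lem:connected} and~\ref{100e}), not just a perturbation.
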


\section{Disjoint edges in topological graphs}

In this section, we prove Theorem \ref{thm:main}. We start with reviewing a few graph theoretic results used in our argument. The following is a classic result in extremal graph theory due to K\H{o}v\'ari, S\'os, and Tur\'an.

\begin{theorem}[Theorem 9.5 in \cite{pachag}]\label{kovari}
 Let $G = (V,E)$ be a graph that does not contain $K_{t,t}$ as a subgraph.
 Then $|E(G)| \leq c_1|V(G)|^{2 - 1/t}$, where $c_1$ is an absolute constant.
 \end{theorem}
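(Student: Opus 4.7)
The plan is to prove the Kővári–Sós–Turán bound by a standard double-counting argument applied to the number of stars $K_{1,t}$ in $G$. Write $n = |V(G)|$ and $e = |E(G)|$, and let $d(v)$ denote the degree of each vertex $v$. The quantity I will bound two ways is
\[
N \;=\; \sum_{v \in V} \binom{d(v)}{t},
\]
which counts the number of ordered pairs $(v, T)$ such that $T$ is a $t$-element subset of the neighborhood of $v$; equivalently, $N$ counts labeled $K_{1,t}$-subgraphs with the distinguished center.

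For the upper bound on $N$, I would switch the order of counting: for a fixed $t$-subset $T \subseteq V$, the number of vertices $v$ adjacent to every vertex of $T$ is at most $t-1$, because otherwise $t$ such common neighbors together with $T$ would form a $K_{t,t}$ (after discarding any overlap, which only helps since $G$ is simple). Therefore $N \le (t-1)\binom{n}{t}$. For the lower bound on $N$, I would apply Jensen's inequality (convexity of $x \mapsto \binom{x}{t}$ extended to the reals) to the average degree $\bar d = 2e/n$, obtaining $N \ge n \binom{2e/n}{t}$, valid provided $\bar d \ge t-1$ (the case $\bar d < t-1$ is handled separately and gives the trivial bound $e < (t-1)n/2$).

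Combining the two bounds yields
\[
n \binom{2e/n}{t} \;\le\; (t-1)\binom{n}{t}.
\]
To extract the announced exponent, I would use the elementary estimates $\binom{x}{t} \ge (x-t+1)^t/t!$ for $x \ge t-1$ and $\binom{n}{t} \le n^t/t!$, which simplify the inequality to
\[
\left(\tfrac{2e}{n} - t + 1\right)^{t} \;\le\; (t-1)\,n^{t-1}.
\]
Taking $t$-th roots and rearranging gives $e \le \tfrac{1}{2}(t-1)^{1/t} n^{2-1/t} + \tfrac{t-1}{2} n$, and since $n \le n^{2-1/t}$ for $n \ge 1$ and $t \ge 2$, this is absorbed into a single constant $c_1$ depending only on $t$. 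The case $t = 1$ (which would correspond to $K_{1,1}$-free, i.e., no edges) is vacuous, so $t \ge 2$ throughout.

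The only step that requires care is the algebraic passage from the binomial inequality to the clean polynomial bound $c_1 n^{2-1/t}$, since the estimate $\binom{x}{t} \ge (x-t+1)^t/t!$ is only useful once $\bar d = 2e/n$ exceeds $t-1$; this regime split is the main (minor) obstacle, and in the complementary low-degree regime the desired bound is trivial. Apart from that bookkeeping, the proof reduces to the star-counting identity and Jensen's inequality.
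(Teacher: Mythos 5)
Your proof is correct, and it is the standard K\H{o}v\'ari--S\'os--Tur\'an double-counting argument; note that the paper does not prove this statement at all --- it is quoted verbatim as Theorem 9.5 of Pach--Agarwal, so there is no internal proof to compare against. Your counting of stars $\sum_v \binom{d(v)}{t}$, the upper bound $(t-1)\binom{n}{t}$ via the $K_{t,t}$-free hypothesis (where indeed no common neighbor of a $t$-set can lie in that set, so the two sides are automatically disjoint), the convexity/Jensen lower bound with the flat extension of $\binom{x}{t}$ below $t-1$, and the split into the low-average-degree regime are exactly the textbook route, and the algebra $e \leq \tfrac12 (t-1)^{1/t} n^{2-1/t} + \tfrac{t-1}{2} n$ is right. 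One small discrepancy: the paper's statement calls $c_1$ an \emph{absolute} constant, while your final absorption step produces a constant depending on $t$. For the paper's application this is immaterial, since $t$ is treated as a fixed constant throughout; and if you wanted the literal statement, it follows from your bound by observing that $(t-1)^{1/t}$ is bounded above by an absolute constant, that $(t-1)n \leq c\, n^{2-1/t}$ once $n \geq t$, and that for $n < t$ even the complete graph satisfies $\binom{n}{2} \leq c\, n^{2-1/t}$.
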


Two edges in a graph are called \emph{independent} if they do not share an endpoint. We define the
\emph{odd-crossing number} $\ocn(G)$ of a graph $G$ to be the minimum number of unordered pairs of edges that are independent and cross an odd number of times over all topological drawings of $G$. The {\it bisection width} of a graph $G$, denoted by $b(G)$, is the smallest nonnegative integer such that there is a partition of the vertex set $V=V_1 \, \cup \, V_2$ with $\frac{1}{3} |V|\leq V_i\leq \frac{2}{3}|V|$ for $i=1,2$, and  $|E(V_1,V_2)|= b(G)$. The following result, due to Pach and T\'oth, relates the odd-crossing number of a graph to its bisection width.\footnote{Pach and T\'oth~\cite{pachtoth} defined the odd-crossing number of a graph $G$ to be the minimum number of pairs of edges that cross an odd number of times (over all drawings of $G$), including pairs of edges with a common endpoint.   However, since the number of pairs of edges with a common endpoint is at most $\sum_{i = 1}^n d_i^2$, this effects Theorem \ref{bisect} only by a constant factor.}

\begin{theorem}[\cite{pachtoth}]\label{bisect}
 There is an absolute constant $c_2$ such that if $G$ is a graph with $n$ vertices of vertex degrees $d_1,\ldots,d_n$, then $$b(G)\leq c_2\log n\sqrt{\ocn(G)+\sum_{i=1}^n d_i^2}.$$
\end{theorem}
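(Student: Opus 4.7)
The plan is to combine the Hanani--Tutte theorem (a graph is planar iff it has a drawing in which every pair of independent edges crosses an even number of times) with a weighted, recursive application of the Lipton--Tarjan planar separator theorem, mirroring the earlier bound of Pach and T\'oth that uses the ordinary crossing number in place of $\ocn(G)$.

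First, I would reduce to planarity. Fix a drawing of $G$ that realizes $\ocn(G)$, and delete one edge from each independent pair of edges that cross an odd number of times. This deletes at most $\ocn(G)$ edges and yields a subgraph $G'$ whose induced drawing has every two independent edges crossing evenly; by Hanani--Tutte, $G'$ is planar. (Pairs of edges with a common endpoint contribute at most $\sum_i d_i^2$ extra cut edges, which fits the bound, as remarked in the footnote of the excerpt.)

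Second, I would bound $b(G')$ by a weighted planar separator argument. Assign each vertex $v$ the weight $d_v^2$ (with $d_v$ the degree in $G$) and invoke a weighted Lipton--Tarjan separator to extract a $\tfrac{2}{3}$-balanced vertex separator $S$ of size $O(\sqrt{n})$, then convert $S$ into an edge cut of size $O(\sqrt{\sum_v d_v^2})$ by Cauchy--Schwarz on the degrees of separator vertices. Recursing on the larger side to promote the balanced separator to a true bisection costs a factor of $\log n$, giving $b(G')=O(\log n\sqrt{\sum_v d_v^2})$.

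Third, I would translate the bisection of $G'$ back to a bisection of $G$. The at most $\ocn(G)$ deleted edges contribute an additive term to the cut, and to absorb this term inside the square root I would subdivide each deleted edge with an auxiliary degree-$2$ vertex placed so close to an original endpoint that the subdivided copies, drawn the same way, still have only even independent crossings. These auxiliary vertices add $\Theta(\ocn(G))$ to the weighted degree-squared sum, so a single application of the planar separator bound to the enlarged planar graph yields $b(G)\leq c_2\log n\sqrt{\ocn(G)+\sum_v d_v^2}$.

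The step I expect to be the main obstacle is the last one --- merging the additive $\ocn(G)$ contribution into the square-root form. The subdivision trick must be implemented so that the re-added subdivided edges preserve planarity (or, equivalently, preserve the even-crossing property needed for Hanani--Tutte), and one must verify that the $\log n$ factor survives when $\ocn(G)$ is polynomially larger than the original $n$, so that recursing on a graph of $n+\Theta(\ocn(G))$ vertices does not inflate the bound. An alternative that bypasses the subdivision is to carry edge weights proportional to odd-crossing participation through the separator recursion, but in either case the remaining difficulty is bookkeeping rather than any new geometric idea.
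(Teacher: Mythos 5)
The paper does not prove this statement; Theorem~\ref{bisect} is quoted from Pach and T\'oth~\cite{pachtoth} (the footnote only remarks on a cosmetic difference in how $\ocn$ is defined), so there is no ``paper's own proof'' to compare against. Your outline---delete one edge from each odd pair, apply Hanani--Tutte to get a planar graph, then run a weighted planar separator recursively---is indeed the standard template and is in the right spirit. However, two of your steps are broken as written.

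First, the Cauchy--Schwarz conversion in your second step does not deliver the claimed bound. From an unweighted Lipton--Tarjan separator $S$ with $|S| = O(\sqrt{n})$ one only gets
$\sum_{v\in S} d_v \le \sqrt{|S|}\,\sqrt{\sum_{v\in S} d_v^2} = O\bigl(n^{1/4}\sqrt{\textstyle\sum_v d_v^2}\,\bigr)$,
an extra factor of $n^{1/4}$. Assigning weights $d_v^2$ to vertices only controls how the \emph{remaining mass} is balanced, not the weighted size of $S$ itself. What is actually needed is a \emph{cost-weighted} planar separator theorem (Gazit--Miller / Djidjev / Alon--Seymour--Thomas type): a planar graph with nonnegative vertex costs $c_v$ has a $\tfrac23$-balanced separator $S$ with $\sum_{v\in S} c_v = O\bigl(\sqrt{\sum_v c_v^2}\bigr)$. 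Taking $c_v=d_v$ then directly yields an edge cut of $O\bigl(\sqrt{\sum d_v^2}\bigr)$, and recursion contributes the $\log n$.

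Second, the subdivision device in your third step does not do what you want. If a deleted edge $e=uv$ crossed an independent edge $f$ an odd number of times, then after placing an auxiliary vertex $w$ on $e$ very close to $u$ and drawing the two pieces ``the same way,'' the long piece $wv$ still crosses $f$ an odd number of times, and $wv$ and $f$ are still independent (the new vertex $w$ lies only on $e$). So the enlarged drawing still has odd independent crossings, Hanani--Tutte does not apply, and the enlarged graph need not be planar. Equivalently, if instead you first \emph{re-embed} $G\setminus D$ planarly and then re-insert the subdivided deleted edges, you have no control over the new crossings these re-inserted arcs create. In short, the $\ocn(G)$ term cannot be pushed under the square root simply by subdividing; this is exactly the nontrivial part of the Pach--T\'oth argument, and your proposal leaves it unresolved rather than merely ``bookkeeping.''
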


We also rely on the result due to Pach and T\'oth~\cite{pachtoth} stated in the introduction.

\begin{theorem}[\cite{pachtoth}]\label{oldbound}
Let $G = (V,E)$ be an $n$-vertex simple topological graph, such that $G$ does not contain $t$ pairwise disjoint edges.  Then $|E(G)| \leq c_3n\log^{4t-8} n$, where $c_3$ is an absolute constant.
\end{theorem}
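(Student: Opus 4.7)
The plan is to prove the bound $|E(G)| \leq c_t n \log^{4t-8} n$ by induction on $t$, with a secondary induction on $n$ inside the inductive step. The base case $t=2$ is the thrackle case: a simple topological graph in which no two edges are disjoint is a thrackle, so by the classical theorem of Lov\'asz, Pach, and Szegedy every $n$-vertex thrackle has $O(n)$ edges, matching $\log^{0} n = 1$.

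For the inductive step, I would fix $t \geq 3$ and assume the bound holds for $t-1$ with constant $c_{t-1}$. Given an $n$-vertex simple topological graph $G$ with no $t$ pairwise disjoint edges and $m:=|E(G)|$ edges, the structural input from the primary induction is: for every edge $e$ of $G$, the subgraph $G_e$ consisting of all edges disjoint from $e$ is itself a simple topological graph on at most $n$ vertices with no $t-1$ pairwise disjoint edges (otherwise, adjoining $e$ would produce $t$ pairwise disjoint edges in $G$). Applying the primary inductive hypothesis to $G_e$ gives $|E(G_e)| \leq c_{t-1} n \log^{4t-12} n$, and summing over $e \in E(G)$ (noting each disjoint pair is counted twice) yields $|E(D)| \leq \tfrac{1}{2} m \, c_{t-1} n \log^{4t-12} n$, where $D$ denotes the disjointness graph of $G$.

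For the secondary induction on $n$, I will apply the bisection theorem (Theorem~\ref{bisect}). First I would pass, via a standard averaging argument, to a subgraph of $G$ with maximum degree at most $4m/n$ and at least $m/2$ edges; this reduction loses only a constant factor in the bound being proved and ensures $\sum_i d_i^2 = O(m^2/n)$. Next I would convert the structural estimate on $|E(D)|$ into an upper bound on $\ocn(G)$ of the form $\ocn(G) = O(|E(D)|) = O(mn \log^{4t-12} n)$. Feeding these bounds into Theorem~\ref{bisect} produces a partition $V = V_1 \cup V_2$ with $|V_i| \leq 2n/3$ and $b(G) = O(n \log^{4t-9} n)$; combining with the secondary inductive hypothesis on each $G[V_i]$ in the identity $m = |E(G[V_1])|+|E(G[V_2])|+b(G)$ closes the recursion and produces $|E(G)| \leq c_t n \log^{4t-8} n$ once $c_t$ is chosen sufficiently large relative to $c_{t-1}$ and $c_2$.

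The main obstacle will be converting the estimate on $|E(D)|$ into the needed upper bound on $\ocn(G)$. In the simple drawing of $G$, the number of independent crossing pairs equals $\binom{m}{2} - \sum_v \binom{d_v}{2} - |E(D)|$, which can be as large as $\binom{m}{2}$ and is not directly bounded by $|E(D)|$. The hard part will be a topological redrawing argument showing that $G$ admits a drawing in which the number of odd-crossing independent pairs is only $O(|E(D)|)$; this redrawing step, leveraging the fact that the disjoint pairs in the original drawing form a relatively small set (by the primary induction), is the technical heart of the Pach--T\'oth argument.
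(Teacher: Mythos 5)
Your overall strategy matches the Pach--T\'oth argument: double induction (on $t$, with a secondary recursion on $n$), the observation that for each edge $e$ the set of edges disjoint from $e$ spans a simple topological graph with no $t-1$ pairwise disjoint edges, and the use of the bisection theorem. You also correctly identify the one genuinely nontrivial step, converting the count of disjoint pairs into a bound on the odd-crossing number. However, you have not supplied that step, and you mischaracterize how it works. The redrawing is \emph{not} a statement that ``leverages the smallness of the disjoint-pair set''; it is a purely topological operation available for any bipartite simple topological graph. One first passes to a bipartite subgraph $G^*\subseteq G$ with $|E(G^*)|\ge m/2$, then deforms the drawing so that $V_a$ lies above a horizontal strip, $V_b$ below, every edge crosses the strip vertically, and the part of the drawing above the strip is reflected. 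A parity computation (as in the proof of Lemma~\ref{lem:bisect2}, equation~\eqref{count}) shows that any pair of edges that crossed exactly once in the original simple drawing now crosses an even number of times. Consequently, only pairs that were disjoint (or shared an endpoint) can cross an odd number of times in the new drawing, which gives $\ocn(G^*)\le |E(D)|$ directly -- the smallness of $D$ enters only afterward, via the primary induction. Bipartiteness is essential for the parity argument (each edge crosses the strip an odd number of times), so this reduction cannot be skipped.

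A second, concrete error: the claim that ``via a standard averaging argument'' one can find a subgraph with at least $m/2$ edges and maximum degree at most $4m/n$ is false. The star $K_{1,n-1}$ already refutes it -- any subgraph of bounded max degree keeps only $O(1)$ edges. What the argument actually needs is \emph{vertex splitting} (as in Lemma~\ref{lem:splitting}): replace a vertex of degree $d$ by $\lceil d/\Delta\rceil$ copies, keeping all $m$ edges and preserving the intersection pattern of the arcs, at the cost of increasing the vertex count by at most $2m/\Delta$. This is the device that controls $\sum_i d_i^2$ in the bisection bound without discarding edges. With these two repairs -- the explicit strip-redrawing lemma in place of a vague ``$\ocn(G) = O(|E(D)|)$'' step, and vertex splitting in place of the faulty averaging -- your outline closes correctly.
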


\paragraph{From disjoint edges to odd crossings.}
Using a combination of Theorems~\ref{kovari}--\ref{oldbound}, we establish the following lemma.

\begin{lemma}\label{lem:bisect2}
Let $G = (V,E)$ be a simple topological bipartite graph on $n$ vertices with vertex degrees $d_1,\ldots ,d_n$, such that $G$ does not contain a set of $t$ edges all disjoint from another set of $t$ edges.  Then
\begin{equation}\label{eq:bisect2}
b(G) \leq  c_4n^{1 - \frac{1}{2t}}\log^{8t -3}n + c_4\log n\sqrt{ \sum\limits_{i = 1}^{n}d_i^2},
\end{equation}
where $c_4$ is an absolute constant.
\end{lemma}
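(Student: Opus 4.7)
The plan is to bound the odd-crossing number $\ocn(G)$ of $G$ as an abstract graph and then invoke Theorem~\ref{bisect}. First, since no $t$ edges are all disjoint from $t$ other edges, in particular $G$ contains no $2t$ pairwise disjoint edges, so Theorem~\ref{oldbound} (with parameter $2t$) gives the total edge bound $m := |E(G)| \leq c_3 n \log^{8t-8} n$.

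Next, form the auxiliary \emph{disjointness graph} $H$ on vertex set $E(G)$, in which two vertices are adjacent iff the corresponding edges of $G$ are disjoint in the given drawing. By the hypothesis of the lemma, $H$ contains no $K_{t,t}$, so Theorem~\ref{kovari} yields $|E(H)| \leq c_1 m^{2 - 1/t}$. In other words, the given drawing has at most $c_1 m^{2-1/t}$ disjoint edge pairs.

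The heart of the argument, and the main obstacle, is to convert this bound on disjoint pairs in the \emph{given} drawing into a bound on $\ocn(G)$, which is a minimum over all drawings. The target is $\ocn(G) \leq c_5 m^{2 - 1/t} \log^{O(1)} n$. To achieve it, I would construct a new drawing of $G$ by selectively detouring each edge around chosen non-incident vertices; each such detour flips, in bulk, the parity of crossings between the edge and an entire star. Viewing the parity pattern of independent pairs as a vector in $\mathbb{F}_2$ that can be modified by XORing with these star-indicators, the goal is to show that there is a choice of detours (using bipartiteness of $G$ to cleanly organize them, e.g.\ detouring only around vertices of one color class) for which the residual odd-crossings can be charged one-to-one (up to polylogarithmic overhead) against disjoint pairs of the original drawing. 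Executing this redrawing carefully, and tracking the bookkeeping of parity flips, is the technically demanding step.

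Granting the bound $\ocn(G) \leq c_5 m^{2-1/t} \log^{O(1)} n$, Theorem~\ref{bisect} then gives
\[
b(G) \;\leq\; c_2 \log n \sqrt{\ocn(G) + \textstyle\sum_i d_i^2} \;\leq\; c_2 \log n \sqrt{c_5}\, m^{1 - 1/(2t)} \log^{O(1)} n \;+\; c_2 \log n \sqrt{\textstyle\sum_i d_i^2}.
\]
Substituting $m \leq c_3 n \log^{8t-8} n$ yields $m^{1 - 1/(2t)} \leq O\bigl(n^{1 - 1/(2t)} \log^{8t - 12 + 4/t} n\bigr)$; together with the extra $\log n$ factor from Theorem~\ref{bisect} and the $\log^{O(1)} n$ from the redrawing step, the total logarithmic exponent fits comfortably within $8t - 3$ (since $-12 + 4/t + 1 + O(1) \leq -3$ for all $t \in \mathbb{N}$ once the absolute constant is absorbed into $c_4$). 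This produces the claimed inequality \eqref{eq:bisect2}.
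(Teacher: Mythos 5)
Your overall skeleton is right: bound the number of disjoint edge pairs by Theorem~\ref{kovari} applied to the disjointness graph, bound $|E(G)|$ by Theorem~\ref{oldbound}, then push the resulting estimate on $\ocn(G)$ through Theorem~\ref{bisect}. You also correctly identify the crux of the argument --- turning ``few disjoint pairs in the given drawing'' into ``$\ocn(G)$ is small'' --- and you correctly note this is where bipartiteness must be used. The arithmetic at the end is fine. But the crux is exactly the step you do not supply, and the sketch you give for it (choosing a collection of vertex detours, reading the parity pattern as a vector over $\mathbb{F}_2$, and XORing with star-indicators) is not the paper's argument and, as stated, is not shown to succeed: you would have to exhibit a choice of detours under which every pair of edges that \emph{crossed} in the original simple drawing ends up crossing an even number of times, while only pairs that were disjoint are allowed to become odd. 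There is no reason given why the span of the star-flips contains the required target vector, and your appeal to ``polylogarithmic overhead'' charging is not backed by any mechanism.

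The paper proves this step with a concrete geometric redrawing due to Pach and T\'oth, and it is worth seeing why it is cleaner than a parity-space argument. Since $G$ is bipartite with classes $V_a,V_b$, one can deform the plane so that $V_a$ lies above the strip $0\le y\le 1$, $V_b$ lies below it, and every edge meets the strip in vertical segments; then reflect the part of the picture above $y=1$ about the $y$-axis, and reconnect across the strip by straight segments. For any two edges $e_1,e_2$ that cross once in $G$, if $e_i$ crosses the strip $k_i$ times (necessarily odd by bipartiteness), the new drawing adds $\binom{k_1+k_2}{2}-\binom{k_1}{2}-\binom{k_2}{2}=k_1k_2$ crossings between them inside the strip, which is odd; together with the one original crossing outside the strip they now cross an even number of times. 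Thus in the new drawing every independent pair that crosses an odd number of times must have been a \emph{disjoint} pair of $G$, so $\ocn(G)\le c_1|E(G)|^{2-1/t}$, and the rest follows as you computed. To complete your proof you would either need to reproduce this strip-reflection construction or give a genuine argument that your detouring scheme achieves the same parity outcome; as written, that step is a gap.
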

\begin{proof}
Since $G$ does not contain $2t$ pairwise disjoint edges, Theorem~\ref{oldbound} yields
\begin{equation}\label{bound1}
|E(G)| \leq c_3n\log^{8t - 8} n.
\end{equation}

We start by using the following redrawing idea of Pach and T\'oth \cite{pachtoth}.  Let $V_a$ and $V_b$ be the vertex classes of the bipartite graph $G$. Consider a simple curve $\gamma$ that decomposes the plane into two parts, containing all points in $V_a$ and $V_b$, respectively. By applying a suitable homeomorphism to the plane that maps $\gamma$ to a horizontal line, $G$ is deformed into a topological graph $G'$ such that (refer to Fig.~\ref{fig:redraw})
\begin{enumerate}\itemsep -1pt
\item the vertices in $V_a$ are above the line $y = 1$, the vertices in $V_b$ are below the line $y = 0$,
\item the part of any edge lying in the horizontal strip $0 \leq y \leq 1$ consists of vertical segments.
\end{enumerate}
\begin{figure}
  \centering
    \includegraphics[width=.7\textwidth]{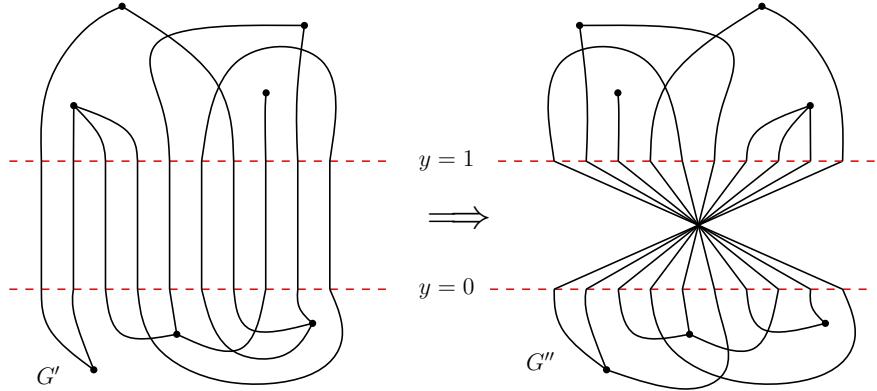}
          \caption{Redrawing procedure}\label{fig:redraw}
\end{figure}
Since a homeomorphism neither creates nor removes intersections between edges, $G$ and $G'$
are isomorphic and their edges have the same intersection pattern.

Next we transform $G'$ into a topological graph $G''$ by the following operations.
Reflect the part of $G'$ that lies above the $y = 1$ line about the $y$-axis. Replace the vertical
segments in the horizontal strip $0\leq y \leq 1$ by straight line segments that reconnect the corresponding pairs on the line $y = 0$ and $y = 1$, and perturb the segments if necessary to avoid triple intersections. It was shown in \cite{pachtoth} that if any two edges cross in $G$ (and $G'$), then they must cross an even number of times in $G''$. Indeed, suppose the edges $e_1$ and $e_2$ cross in $G$. Since $G$ is simple, they share exactly one point in common. Let $k_i$ denote the number of times edge $e_i$ crosses the horizontal strip for $i \in {1,2}$, and note that $k_i$ must be odd since the graph is bipartite. These $k_1+k_2$ segments within the strip pairwise cross in $G''$, creating ${k_1+k_2\choose 2}$ crossings. Since edge $e_i$ now crosses itself ${k_i\choose 2}$ times in $G''$, there are
\begin{equation}\label{count}
{k_1+k_2\choose 2}- {k_1\choose 2}  - {k_2 \choose 2} = k_1k_2
\end{equation}
crossings between edges $e_1$ and $e_2$ within the strip, which is odd when $k_1$ and $k_2$ are odd. Since $e_1$ and $e_2$ had one point in common outside the strip in both $G$ and $G''$, then $e_1$ and $e_2$ cross each other an even number
of times in $G''$. (Note that one can easily eliminate self-intersections by local modifications
around these crossings.)

Hence, the number of pairs of edges that are independent and cross an odd number of times in $G''$
is at most the number of disjoint pairs of edges in $G$, which is in turn, by Theorem~\ref{kovari} applied to the complement of the intersection graph of $G$, at most $c_1|E(G)|^{2 - 1/t}$ . Combined with (\ref{bound1}), we have
\begin{eqnarray}\label{ocn1}
 \ocn(G) & \leq & c_1(c_3n\log^{8t-8}n)^{2 - \frac{1}{t}} \nonumber\\
         & \leq & cn^{2-\frac{1}{t}} \log^{16t-8}n,       \nonumber
\end{eqnarray}
where $c$ is an absolute constant. Together with Theorem~\ref{bisect}, we have
\begin{eqnarray}
    b(G) & \leq & c_2\log n\sqrt{\left( cn^{2-\frac{1}{t}} \log^{16t-8}n\right) + \sum\limits_{i = 1}^{n}d_i^2} \nonumber\\
     & \leq & c_2\sqrt{c}n^{1 - \frac{1}{2t}}\log^{8t -3}n + c_2\log n\sqrt{ \sum_{i = 1}^{n}d_i^2}  \nonumber\\
     & \leq & c_4n^{1 - \frac{1}{2t}}\log^{8t -3}n + c_4\log n\sqrt{ \sum_{i = 1}^{n}d_i^2},  \nonumber
\end{eqnarray}
where $c_4$ is an absolute constant, as required.
\end{proof}

If the maximum degree of $G$ is relatively small, we obtain a sublinear bound on the
bisection width.

\begin{corollary}\label{cor:bisect3}
Let $G = (V,E)$ be simple topological bipartite graph on $n$ vertices with vertex degrees $d_1,\ldots ,d_n\leq n^{1/5}$ such that $G$ does not contain a set of $t$ edges all disjoint from another set of $t$ edges.  Then
\begin{equation}\label{eq:bisect3}
b(G) \leq  c_5n^{1 - \frac{1}{4t}},
\end{equation}
where $c_5$ is an absolute constant.
\end{corollary}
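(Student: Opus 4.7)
The plan is to derive Corollary~\ref{cor:bisect3} as a direct consequence of Lemma~\ref{lem:bisect2}, where the degree hypothesis $d_i \leq n^{1/5}$ is used to control the $\sqrt{\sum d_i^2}$ term. Writing Lemma~\ref{lem:bisect2} down, we have
\[
b(G) \leq c_4 n^{1 - \frac{1}{2t}} \log^{8t-3} n + c_4 \log n \sqrt{\textstyle\sum_{i=1}^n d_i^2},
\]
so the task splits into showing that each of the two summands is $O(n^{1 - 1/(4t)})$.

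For the first summand, the exponent of $n$ is already smaller than $1 - 1/(4t)$; in fact $n^{1 - 1/(2t)} \log^{8t-3} n \leq n^{1 - 1/(4t)}$ reduces to $\log^{8t-3} n \leq n^{1/(4t)}$, which holds for all $n$ sufficiently large in terms of the constant $t$ (and the regime $n \leq n_0(t)$ is absorbed into $c_5$ via the trivial bound $b(G) \leq n$). This is a routine ``polylog versus polynomial'' comparison; no new idea is needed.

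The second summand is where the degree hypothesis enters. I would use the standard inequality
\[
\sum_{i=1}^n d_i^2 \;\leq\; \bigl(\max_i d_i\bigr) \sum_{i=1}^n d_i \;=\; \bigl(\max_i d_i\bigr)\cdot 2|E(G)|,
\]
together with the fact that the hypothesis on $G$ forbids $2t$ pairwise disjoint edges, so Theorem~\ref{oldbound} gives $|E(G)| \leq c_3 n \log^{8t-8} n$. Combined with $\max_i d_i \leq n^{1/5}$, this yields
\[
\sum_{i=1}^n d_i^2 \;\leq\; 2c_3\, n^{6/5} \log^{8t-8} n,
\]
so $c_4 \log n \sqrt{\sum d_i^2} = O(n^{3/5} \log^{4t-3} n)$. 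Since $1 - 1/(4t) \geq 3/4 > 3/5$ for every $t \geq 1$, the polynomial slack $n^{(1 - 1/(4t)) - 3/5}$ swallows the polylogarithmic factor for $n$ large, giving the desired bound.

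The ``obstacle'' here is minor: it is really just verifying that the exponents line up, since the heavy lifting (the bisection-width estimate and the bound on $|E(G)|$) has already been packaged into Lemma~\ref{lem:bisect2} and Theorem~\ref{oldbound}. The one modeling choice worth flagging is the use of $\sum d_i^2 \leq (\max d_i) \sum d_i$ rather than the crude $\sum d_i^2 \leq n \cdot (\max d_i)^2 = n^{7/5}$; both would give the desired bound, but the former keeps the exponent of $n$ under $6/5$ and leaves more room for the $\log$ factor. Finally, to obtain a clean statement for all $n$, I would choose $c_5$ large enough to cover the finitely many small-$n$ cases by the trivial bound $b(G) \leq n$.
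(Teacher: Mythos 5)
Your proof is essentially the same as the paper's: substitute the degree bound into Lemma~\ref{lem:bisect2} and check that both summands are $O(n^{1-1/(4t)})$, since $1-1/(4t)\geq 3/4$ beats both exponents and the polynomial slack absorbs the polylogarithms. The paper uses the cruder $\sum d_i^2 \leq n\cdot (n^{1/5})^2 = n^{7/5}$, giving an $n^{7/10}\log n$ term, whereas you use $\sum d_i^2 \leq (\max_i d_i)\cdot 2|E(G)|$ together with Theorem~\ref{oldbound}, giving the tighter $n^{3/5}\log^{4t-3}n$; both land comfortably, and you explicitly acknowledge the cruder option as an alternative.

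One slip worth flagging: "\emph{the trivial bound $b(G)\leq n$}" is not a trivial bound -- the bisection width of a graph can be superlinear in the vertex count (e.g.\ for $K_n$). Fortunately the claim is unnecessary: both ratios $n^{-1/(4t)}\log^{8t-3}n$ and $n^{3/5-(1-1/(4t))}\log^{4t-3}n$ tend to $0$ and are bounded over all $n\geq 1$, so a single sufficiently large $c_5$ handles every $n$ without a separate small-$n$ case. If you did want a fallback for small $n$, the correct one here is $b(G)\leq |E(G)|\leq c_3 n\log^{8t-8}n$ from Theorem~\ref{oldbound} (which you already invoke), not $b(G)\leq n$.
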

\begin{proof}
Substituting $d_i\leq n^{1/5}$ into~\eqref{eq:bisect2}, we have
\begin{eqnarray}
    b(G) & \leq & c_4 n^{1 - \frac{1}{2t}}\log^{8t -3}n + c_4\log n\sqrt{n\cdot n^{2/5}}  \nonumber\\
     & \leq & c_4 n^{1 - \frac{1}{2t}}\log^{8t -3}n + c_4n^{7/10}\log n \nonumber\\
     & \leq & c_5 n^{1 - \frac{1}{4t}},
\end{eqnarray}
for a sufficiently large constant $c_5$.
\end{proof}

\paragraph{Vertex splitting for topological graphs.}
Given a simple topological graph with $n$ vertices, we reduce the maximum degree below $n^{1/5}$ by a standard vertex splitting operation. Importantly, this operation can be performed such that it preserves the intersection pattern of the edges.

\begin{lemma}\label{lem:splitting}
Let $G$ be a simple topological graph with $n$ vertices and $m$ edges; and let $\Delta\geq 2m/n$.
Then there is a simple topological graph $G'$ with maximum degree at most $\Delta$, at most $n+2m/\Delta$ vertices, and precisely $m$ edges such that the intersection graph of its edges is isomorphic to that of $G$.
\end{lemma}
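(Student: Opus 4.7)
The plan is to produce $G'$ by a local vertex splitting: inside a small disk around each vertex $v$, replace $v$ by $\lceil d(v)/\Delta\rceil$ copies and rewire its incident edges among these copies, leaving everything outside the disk untouched. The hypothesis $\Delta \geq 2m/n$ does not actually enter the construction; it only ensures that the stated bound is meaningful, since then $n+2m/\Delta \leq 2n$.

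For each vertex $v$ I would first choose an open disk $D_v$ about $v$ such that (i) the disks $\{D_v\}_{v\in V}$ are pairwise disjoint, (ii) $D_v$ meets no edge that is not incident to $v$, (iii) each edge incident to $v$ crosses $\partial D_v$ transversally at exactly one point, and (iv) no two edges incident to $v$ cross inside $D_v$. Such disks exist because $G$ has finitely many vertices, edges, and crossings. Let $p_1,\dots,p_{d(v)}$ be the boundary points on $\partial D_v$ taken in cyclic order, and partition them into $k_v := \lceil d(v)/\Delta\rceil$ \emph{consecutive} blocks $B_1,\dots,B_{k_v}$, each of size at most $\Delta$. Inside $D_v$ I place $k_v$ new vertex points $v_1,\dots,v_{k_v}$ along a small arc through $v$; for each block $B_j$ I then redraw the portions inside $D_v$ of the corresponding edges as a non-crossing fan of Jordan arcs from the points of $B_j$ to $v_j$. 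Because the blocks are consecutive along $\partial D_v$ and the copies $v_j$ sit on a common small arc, the fans belonging to distinct blocks can be routed to be pairwise disjoint as well.

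Let $G'$ be the resulting drawing. Since every modification took place inside some $D_v$, and each $D_v$ avoids all edges not incident to $v$ and (by (iv)) contains no edge crossings of $G$, no pair of edges of $G$ gains or loses an intersection under the rewiring. Hence the natural correspondence between edges of $G$ and edges of $G'$ is an isomorphism of intersection graphs, so in particular $G'$ is simple and has exactly $m$ edges. Each copy $v_j$ inherits only the edges whose boundary points lie in $B_j$, so its degree is at most $\Delta$. Finally,
\begin{equation*}
|V(G')| \;=\; \sum_{v \in V} k_v \;\leq\; \sum_{v \in V}\left(\frac{d(v)}{\Delta}+1\right) \;=\; n + \frac{2m}{\Delta}.
\end{equation*}

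The only delicate point is the local redrawing step: I must check that after rewiring, the fresh arcs inside each $D_v$ create no new self-crossings and no new crossings between edges, so that $G'$ is still a simple topological graph with the same intersection graph. Once the copies $v_1,\dots,v_{k_v}$ are placed on a small common arc and the block partition respects the cyclic order on $\partial D_v$, this is routine, but it is the one place where the drawing (not just the combinatorics) has to be inspected.
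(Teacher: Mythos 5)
Your construction has a genuine gap at the crucial step, and it is precisely the step the paper singles out as nontrivial. You route the fans of the different blocks $B_1,\dots,B_{k_v}$ to be \emph{pairwise disjoint} inside $D_v$. But then consider two edges $e=vu_j$ and $e'=vu_{j'}$ whose boundary points lie in different blocks, so after the split $e$ ends at $v_i$ and $e'$ ends at $v_{i'}$ with $i\neq i'$. In $G$ these edges intersected (they shared the endpoint $v$), and because $G$ is simple they have no other common point, hence they do not meet outside $D_v$. In your $G'$ they no longer share a vertex, and by your disjoint-fan routing they do not cross inside $D_v$ either, so they are disjoint. Thus the natural edge correspondence is \emph{not} an isomorphism of intersection graphs: every pair of edges formerly incident to $v$ but now split across different copies loses its intersection. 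This matters for how the lemma is used in the paper, since the split graph must inherit the ``no $t$ edges disjoint from another $t$ edges'' property, and your construction could create many new disjoint pairs.

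The paper's proof is identical to yours up to this point, but then adds one more local operation: after placing the copies $v_1,\dots,v_{k_v}$, it perturbs them (``moves each $v_i$ across the circle'') so that every edge incident to $v_i$ \emph{crosses} every edge incident to $v_{i'}$ for $i\neq i'$, exactly once, inside the small circle. This restores the lost adjacencies in the intersection graph and, because $G$ was simple, each such pair of edges now meets exactly once in $G'$ (a crossing inside the disk), so simplicity is also preserved. You flagged the local redrawing as ``the one place where the drawing has to be inspected'' but then resolved it in exactly the wrong direction; the point is not that the fans can be made disjoint, but that they must be made to cross.

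Your degree bound, vertex count, and the observation that nothing changes outside the disks $D_v$ are all correct and match the paper.
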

\begin{proof}
We successively split every vertex in $G$ whose degree exceeds $\Delta$ as follows. Refer to Fig.~\ref{fig:split}. Let $v$ be a vertex of degree $d(v) = d > \Delta$, and let $vu_1,vu_2,\ldots ,vu_d$ be the edges incident to $v$ in counterclockwise order. In a small neighborhood around $v$, replace $v$ by $\lceil d/\Delta\rceil$ new vertices, $v_1,\ldots , v_{\lceil d/\Delta\rceil}$ placed in counterclockwise order on a circle of small radius centered at $v$. Without introducing any new crossings, connect $u_j$ to $v_i$ if and only if $\Delta(i-1) < j \leq \Delta i$ for $j \in \{1,\ldots ,d\}$ and $i\in \{1,\ldots ,\Delta\}$. Finally, we do a local change within the small circle by moving each vertex $v_i$ across the circle, so that every edge incident to $v_i$ crosses all edges incident to $v_{i'}$, for all $i\neq i'$. As a result, any two edges incident to some vertex $\{v_1,\ldots , v_{\lceil d/\Delta\rceil}\}$ intersect precisely once: either at a common endpoint or at a crossing within the small circle centered at $v$.

 \begin{figure}
  \centering
    \includegraphics[width=.9\textwidth]{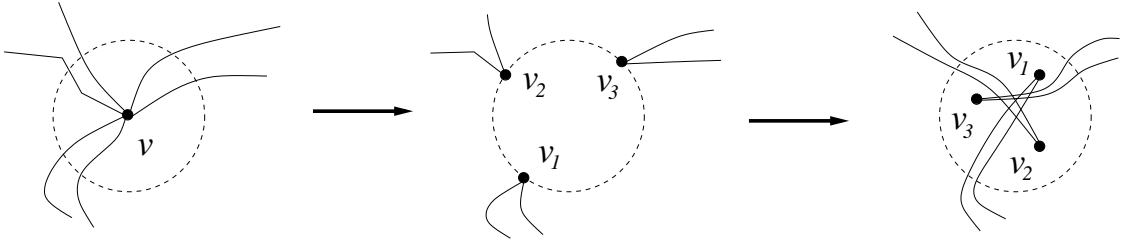}
          \caption{Splitting a vertex $v$ into new vertices $v_1,v_2,v_3$, such that each $v_i$ has degree at most $\Delta$.  Moreover, we do not introduce any disjoint pairs of edges and our new graph remains simple.}\label{fig:split}
\end{figure}

After applying this procedure to all vertices $G$, we obtain a simple topological graph $G'$ of maximum degree at most $\Delta$. By construction, $G'$ has $m$ edges, and the intersection pattern of the edges is the same as in $G$. The number of vertices in $G'$ is
$$|V(G')|
\leq \sum_{v \in V}\left\lceil\frac{d(v)}{\Delta}\right\rceil
\leq n + \sum_{v \in V} \frac{d(v)}{\Delta}
\leq n + \frac{2m}{\Delta},$$
as claimed.
\end{proof}

\paragraph{Putting things together.}
Since all graphs have a bipartite subgraph with at least half of its edges, Theorem~\ref{thm:main} immediately follows from the following.

\begin{theorem}\label{thm:main-}
Let $G$ be an $n$-vertex simple topological bipartite graph such that $G$ does not contain $t$ edges all disjoint from another set of $t$ edges. Then
\begin{equation}\label{eq:main-}
|E(G)| \leq c_6 n,
\end{equation}
where $c_6=c_6(t)$ depends only on $t$.
\end{theorem}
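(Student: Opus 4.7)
The plan is to prove Theorem~\ref{thm:main-} by strong induction on $n$, combining Lemma~\ref{lem:splitting} (to reduce maximum degree) with the improved bisection bound of Corollary~\ref{cor:bisect3}. For $t=1$, the hypothesis reduces $G$ to a thrackle, so $|E(G)|=O(n)$ by classical results \cite{fulek,lovasz}; henceforth I focus on $t\ge 2$. Given $G$ on $n$ vertices with $m$ edges, I would first apply Lemma~\ref{lem:splitting} with $\Delta=n^{1/5}$ to obtain $G'$ on $n'\le n+2m/n^{1/5}$ vertices of maximum degree $\le n^{1/5}$, preserving the intersection graph of edges (so $G'$ is still simple topological bipartite with $m$ edges and the same forbidden structure); by Theorem~\ref{oldbound}, $n'\le (1+o(1))\,n$. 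Corollary~\ref{cor:bisect3} then yields a balanced bisection $V(G')=V_1\sqcup V_2$ with $|V_i|\in[n'/3,2n'/3]$ and at most $c_5 (n')^{1-1/(4t)}$ cut edges, and I would invoke the inductive hypothesis on each induced subgraph $G'[V_i]$ (valid because $|V_i|<n$ for $n$ large).

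The naive inductive claim $|E(G)|\le c_6 n$ will not close, because summing over the two halves contributes $c_6(|V_1|+|V_2|)=c_6 n'>c_6 n$. I would therefore use the strengthened ansatz
\[
|E(G)| \le c_6\, n - c_7\, n^{\alpha}, \qquad \alpha = 1-\tfrac{1}{4t},
\]
valid for $n$ above some threshold $n_0$ depending on $t$; Theorem~\ref{oldbound} covers the base case $n<n_0$. Summing the ansatz on both halves and using strict concavity of $x\mapsto x^\alpha$ together with the balance $|V_i|\ge n'/3$ gives $|V_1|^\alpha+|V_2|^\alpha \ge C_\alpha (n')^\alpha$ for a constant $C_\alpha>1$. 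Hence
\[
m \le c_6\, n' - (c_7 C_\alpha - c_5)\, (n')^{\alpha},
\]
and substituting $n'\le n+2c_6 n^{4/5}$ (from the inductive bound $m\le c_6 n$) together with $(n')^\alpha\ge n^\alpha$ reduces the right-hand side to $c_6 n - c_7 n^\alpha$, provided that $c_7>c_5/(C_\alpha-1)$ and $n$ is large enough that the $O(c_6^2 n^{4/5})$ splitting overhead is absorbed into the $\Omega(n^\alpha)$ concavity savings.

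\paragraph{Main obstacle.}
The central difficulty will be calibrating the strengthened ansatz so that the induction closes with constants depending only on $t$. The condition $\alpha>4/5$, equivalently $t\ge 2$, is essential: it ensures that the $n^{4/5}$ splitting overhead is strictly lower-order than the $n^\alpha$ savings, so the two terms can be balanced by choosing $c_7$ large. Because $C_\alpha$ is only slightly larger than $1$ when $\alpha$ is close to $1$, $c_7$ must be taken correspondingly large, which in turn forces one to choose the threshold $n_0$ large enough that the base bound from Theorem~\ref{oldbound} dominates the ansatz for $n<n_0$; verifying that a finite $n_0=n_0(t)$ consistent with all these constraints exists, and tracking the resulting constant $c_6$, is the technical heart of the argument.
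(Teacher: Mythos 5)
Your overall strategy --- induction on $n$ with a strengthened hypothesis, vertex splitting via Lemma~\ref{lem:splitting} to drive the maximum degree below $n^{1/5}$, the bisection bound of Corollary~\ref{cor:bisect3}, and strict-concavity savings from a balanced partition --- is exactly the route taken in the paper. The paper's ansatz is $|E(G)| \leq c_6\bigl(n - n^{1-1/(7t)}\bigr)$: a single constant $c_6$ and the exponent $1-\tfrac{1}{7t}$, which strictly exceeds both $5/6$ (controlling the splitting overhead) and the bisection exponent $1-\tfrac{1}{4t}$. That choice makes the concavity savings dominate the bisection cost and the splitting overhead for large $n$ with no coefficient-balancing needed, so the argument works uniformly for all $t\geq 1$ with no special case for thrackles. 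Your variant uses the smaller exponent $\alpha = 1-\tfrac{1}{4t}$ and a separate tunable constant $c_7$, which requires balancing $c_7(C_\alpha-1)$ against $c_5$ and forces the $t=1$ case to be handled separately; this is workable but strictly more delicate.

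There is, however, a genuine error as written. When you bound the splitting overhead you substitute $n' \leq n + 2c_6 n^{4/5}$, justified "from the inductive bound $m \leq c_6 n$." This is circular: $m = |E(G)|$ is exactly the quantity you are trying to bound, and the inductive hypothesis applies only to graphs on fewer than $n$ vertices, not to $G$ itself. You must instead use the unconditional a-priori estimate $m \leq c_3 n \log^{8t-8} n$ of Theorem~\ref{oldbound}, giving $n' \leq n + 2c_3 n^{4/5}\log^{8t-8}n$ (you even invoke Theorem~\ref{oldbound} earlier in the same paragraph and then abandon it). With this fix one must still check that the threshold $n_0$ can be chosen without a hidden circular dependence on $c_6$: the base case forces $c_6$ to grow with $n_0$, so $n_0$ may not itself depend on $c_6$. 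The clean resolution, implicit in the paper, is to tie $c_7$ proportionally to $c_6$ (the paper effectively takes $c_7 = c_6$), so that $c_6$ factors out of the final inequality and the requirement on $n_0$ involves only $t$, $c_3$, $c_5$, and $C_\alpha$.
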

\begin{proof}
Let $t\in \NN$ be a constant.  We prove, by induction on $n$, that $|E(G)| \leq c_6 (n-n^{1-\frac{1}{7t}})$. The negative term will be needed in the induction step.  Let $n_0=n_0(t)$ be a sufficiently large constant (specified in \eqref{eq:1} and \eqref{eq:2} below) that depends only on $t$, and on the constants $c_3$ and $c_5$ defined in Theorem~\ref{oldbound} and Corollary~\ref{cor:bisect3}, respectively. Let $c_6$ be a sufficiently large constant such that $c_6\geq 2c_5$ and for every positive integer $n\leq n_0$, we have
\begin{equation}
c_3n\log^{8t-8} n \leq c_6(n-n^{1 - \frac{1}{7t}}),
\end{equation}

The choice of $n_0$ and $c_6$ ensures that \eqref{eq:main-} holds for all graphs
with at most $n_0$ vertices. Now consider an integer $n>n_0$, and
assume that \eqref{eq:main-} holds for all graphs with fewer than $n$ vertices.
Let $G$ be a simple topological bipartite graph with $n$ vertices such that $G$ does
not contain $t$ edges all disjoint from another set of $t$ edges.

By Theorem~\ref{oldbound}, $G$ has $m \leq c_3n\log^{8t-8} n$ edges. By Lemma~\ref{lem:splitting}, there is a simple topological graph $G'$ of maximum degree at most $\Delta=n^{1/5}$, $n'\leq n+2m/n^{1/5}$ vertices, and $m'=m$ edges, such that the intersection graph of its edges is isomorphic to that of $G$. Theorem~\ref{oldbound} implies that $n'\leq n+2m/n^{1/5}\leq n+2c_3n^{4/5}\log^{8t-8} n$. If $n\geq n_0$ for a sufficiently large constant $n_0$, then
\begin{equation}\label{eq:1}
n'\leq n+2c_3n^{4/5}\log^{8t-8} n\leq n+n^{5/6}.
\end{equation}

Since $G$ and $G'$ have the same number of edges, it is now enough to estimate $|E(G')|$. Note that $\Delta=n^{1/5}\leq (n')^{1/5}$, and by Corollary~\ref{cor:bisect3}, the bisection width of $G'$ is bounded by
$$
b(G') \leq c_5(n')^{1 - \frac{1}{4t}}
      \leq c_5(n+n^{5/6})^{1 - \frac{1}{4t}}
      \leq 2c_5 n^{1 - \frac{1}{4t}}.
$$

Partition the vertex set of $G'$ as $V'=V_1 \, \cup \, V_2$ with $\frac{1}{3} |V'|\leq V_i\leq \frac{2}{3}|V'|$ for $i=1,2$, such that $G'$ has $b(G')$ edges between $V_1$ and $V_2$. Denote
by $G_1$ and $G_2$ the subgraphs induced by $V_1$ and $V_2$, respectively. Put $n_1=|V_1|$ and $n_2=|V_2|$, where $n_1+n_2=n'\leq n+n^{5/6}$.

Note that both $G_1$ and $G_2$ are simple topological graphs that do not contain $t$ edges all disjoint from another set of $t$ edges. By the induction hypothesis, $|E(G_i)|\leq c_6(n_i-n_i^{1-1/7t})$ for $i=1,2$. The total number of edges in $G_1$ and $G_2$ is
\begin{eqnarray*}
 |E(G_1)|+|E(G_2)| &\leq & c_6(n_1-n_1^{1-\frac{1}{7t}}) + c_6(n_2-n_2^{1-\frac{1}{7t}})\nonumber\\
        & \leq & c_6 (n_1+n_2) -
            c_6(n_1^{1-\frac{1}{7t}}+n_2^{1-\frac{1}{7t}})\\
        & \leq & c_6(n')
          - c_6\left( \left(\frac{n_1}{n'}\right)^{1 - \frac{1}{7t}}+
          \left(\frac{n_2}{n'}\right)^{1 - \frac{1}{7t}}\right) (n')^{1 - \frac{1}{7t}}\\
        &\leq& c_6 (n+n^{5/6}) -c_6 \left( \left(\frac13\right)^{1 - \frac{1}{7t}}+\left(\frac23\right)^{1 - \frac{1}{7t}}\right)n^{1 - \frac{1}{7t}}\\
        &=& c_6 (n+n^{5/6}) -c_6 \alpha n^{1 - \frac{1}{7t}}\\
        &\leq& c_6 (n -n^{1 - \frac{1}{7t}})
             + c_6\left(n^{5/6} - (\alpha-1) n^{1 - \frac{1}{7t}}\right),
\end{eqnarray*}
where we write $\alpha = (1/3)^{1 - 1/7t}+(2/3)^{1 - 1/7t}$ for short.
Note that for every $t\in \NN$, we have $\alpha>1$.
Taking into account the edges between $V_1$ and $V_2$,
the total number of edges in $G'$ (and hence $G$) is
\begin{eqnarray}
|E(G')| &= & |E(G_1)|+|E(G_2)| + b(G') \nonumber\\
        &\leq& c_6 (n -n^{1 - \frac{1}{7t}}) + c_6\left(n^{5/6} - (\alpha-1) n^{1 - \frac{1}{7t}}\right)+  2c_5 n^{1 - \frac{1}{4t}}\nonumber\\
        &\leq& c_6 (n -n^{1 - \frac{1}{7t}})
             + c_6\left(n^{5/6} + n^{1 - \frac{1}{4t}}
             - (\alpha-1) n^{1 - \frac{1}{7t}}\right)\nonumber\\
        &\leq & c_6(n-n^{1-\frac{1}{7t}}), \label{eq:2}
\end{eqnarray}
where the last inequality holds for $n\geq n_0$ if $n_0$ is sufficiently large (independent of $c_6$). This completes the induction step, hence the proof of Theorem~\ref{thm:main-}.
\end{proof}

\section{Application: the tangled-thrackle conjecture}

Let $G$ be tangled-thrackle with $n$ vertices. By slightly modifying the edges (i.e., Jordan arcs) near the points of tangencies, we obtain a simple topological graph $\tilde{G}$ with the same number of vertices and edges such that every pair of tangent edges in $G$ become disjoint in $\tilde{G}$ and all other intersection points between edges remain the same. In order to show that $|E(G)| \leq O(n)$, invoking Theorem \ref{thm:main}, it suffices to prove the following.

\begin{lemma}\label{100e}
For every tangled-thrackle $G$, the simple topological graph $\tilde{G}$ does not contain a set of 200 edges all disjoint from another set of 200 edges.
\end{lemma}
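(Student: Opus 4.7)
I plan to argue by contradiction. Suppose there exist $A, B \subseteq E(\tilde G)$ with $|A| = |B| = 200$ such that every $a \in A$ is disjoint from every $b \in B$ in $\tilde G$. Because the perturbation $G \to \tilde G$ preserves proper crossings and shared endpoints and only converts tangencies into disjoint pairs, each pair $(a,b) \in A \times B$ must have been a tangency in $G$; in particular, no edge of $A$ shares an endpoint with any edge of $B$, so $V(A) \cap V(B) = \emptyset$.

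The first step is a sidedness pigeonhole. At each tangency between arcs $a$ and $b$, both arcs lie locally on the same side of their common tangent line, which gives a single label $\sigma(a,b) \in \{+,-\}$ recording which side. This $2$-colours the complete bipartite pattern $A \times B$. By the K\H{o}v\'ari--S\'os--Tur\'an theorem (Theorem~\ref{kovari}) applied to the denser colour class, we can pass to subsets $A' \subseteq A$ and $B' \subseteq B$ of any prescribed constant size $k$ on which $\sigma$ is constant. The generous threshold $200$ is chosen so that this Ramsey-style reduction is affordable for any small constant $k$ needed below.

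The second and main step is to turn the sign-consistent tangencies into a topological obstruction. The plan is to locally \emph{press} each tangency in $A' \times B'$, pushing the two arcs through one another and converting the tangency into a pair of proper crossings. Because the tangencies share a common local orientation, these local modifications can be carried out coherently and independently. The result is a topological drawing of $A' \cup B'$ in which every $a \in A'$ crosses every $b \in B'$ exactly twice, while intra-class interactions are untouched. Equivalently, viewing the graph $H$ on vertex set $V(A') \cup V(B')$ with edge set $A' \cup B'$, every independent pair consisting of one edge from $A'$ and one from $B'$ crosses an even number of times. A Hanani--Tutte--type parity argument (or an Euler-formula computation on the planarisation) should then force a contradiction with the non-planarity of a suitable minor once $k$ is a small constant such as $3$.

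The main obstacle I anticipate is formalising this last step. Because $V(A')$ and $V(B')$ are disjoint, the ``even crossings'' constraint by itself does not force non-planarity of $H$; one must also exploit the further structural information that in $G$ the bipartite pattern of $k^2$ pairs was realised tangentially. A possible alternative route, which I would pursue in parallel, is the opposite move: push each tangency in $A' \times B'$ in the direction making $a$ and $b$ disjoint, and argue that an Euler-formula count on the resulting planar arrangement forces $k^2 = O(k)$, a contradiction for $k$ large enough. Either way, the explicit constant $200$ is chosen so that the Ramsey reduction to the required constant $k$ has ample slack.
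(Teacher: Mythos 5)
Your first step—observing that every disjoint pair in $\tilde G$ corresponds to a tangency in $G$—is correct and matches the paper. After that, however, the argument has a genuine gap, and you identify part of it yourself: the ``even crossings'' move and a Hanani--Tutte-style conclusion cannot work, because the bipartite pattern $A'\cup B'$ has only $O(k)$ edges on $O(k)$ vertices and is therefore always planar, so parity gives no obstruction. Your fallback—pushing tangencies apart and hoping that ``an Euler-formula count forces $k^2=O(k)$''—points in the right direction but is not a proof: a generic pseudo-segment arrangement with two families of $k$ arcs and $k^2$ contact points has $\Theta(k^2)$ vertices, edges, and faces, and Euler's formula is satisfied with no contradiction. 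Something structural must limit the number of ``slots'' where the tangencies can occur, and that structural ingredient is what is missing. Also, the sidedness $2$-colouring and the K\H{o}v\'ari--S\'os--Tur\'an reduction are unnecessary detours; the paper's argument uses all $200\times 200$ tangencies at once with no Ramsey step, and the sign $\sigma(a,b)$ is only locally defined, so it is unclear that constancy of $\sigma$ would even buy you anything.

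The missing key idea is the following. Since no arc of one family crosses any arc of the other (they are only tangent), the entire family $\mathcal L_1$ lies in the closure of a \emph{single} face $F_2$ of the arrangement $\mathcal L_2$, and symmetrically $\mathcal L_2$ lies in a single face $F_1$ of $\mathcal L_1$ (this is where one uses that a tangled-thrackle is a connected arrangement, or, more generally, the remark after Lemma~\ref{lem:connected}). Therefore every tangency sits on an arrangement edge \emph{incident to that single face}, and the number of such face-boundary edges is controlled by the Davenport--Schinzel bound of Theorem~\ref{facesize}: at most $\lambda_3(2m)=O(m\log m)$, which is near-linear rather than quadratic. One then builds a planar ``tangency graph'' $H$ with one vertex per boundary edge of $F_1$ or $F_2$ and one edge per tangency; with $200^2$ edges but only $2\lambda_3(400)$ vertices the ratio exceeds $3$, contradicting planarity via Euler's formula. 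Without the ``single face'' observation and the near-linear single-face complexity bound, the Euler count you propose cannot close.
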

Before proving the lemma, we briefly review the concept of
Davenport-Schinzel sequences and arrangements of pseudo-segments.

A finite sequence $U = (u_1,\ldots , u_t)$ of symbols over a finite alphabet is a \emph{Davenport-Schinzel sequence of order $s$} if it satisfies the following two properties:
\begin{itemize}
\item no two consecutive symbols in the sequence are equal to each other;
\item for any two distinct letters of the alphabet, $a$ and $b$, the sequence does not contain a (not necessarily consecutive) subsequence $(a,b,a,\ldots, b,a)$ consisting of $s + 2$ symbols alternating between $a$ and $b$.
\end{itemize}

The maximum length of a Davenport-Schinzel sequence of order $s$ over an alphabet of size $n$ is denoted $\lambda_s(n)$. Sharp asymptotic bounds for $\lambda_s(n)$ were obtained by Nivasch \cite{nivasch} and Pettie \cite{seth}. However, to avoid the constants hidden in the big-Oh notation, we use simpler explicit bounds. Specifically, we use the following upper bound for $\lambda_3(n)$.

\begin{theorem}[see Proposition 7.1.1 in~\cite{matousek}]
$\lambda_3(n) < 2n \ln n+3n$.
\end{theorem}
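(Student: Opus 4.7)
I would prove the lemma by contradiction. Suppose that $\tilde G$ contains subsets $A, B \subseteq E(\tilde G)$ with $|A|=|B|=200$ such that every $a \in A$ is disjoint (in $\tilde G$) from every $b \in B$. Since the perturbation that transforms $G$ into $\tilde G$ turns tangent pairs into disjoint pairs and preserves all other intersection types, this assumption is equivalent to saying that in the tangled-thrackle $G$, every pair $(a,b)\in A\times B$ meets in a single tangency point $p_{ab}$.

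My first step is a purely geometric observation: if $a$ is tangent to every edge of a set $B$ and shares no other point with any of them, then $a$ lies in the closure of a single face of the arrangement of $B$. The reason is that $a\setminus(a\cap B)$ is disjoint from $B$; between two consecutive tangencies $a$ remains in one face; and at each tangency the curve $a$ touches some $b$ without crossing, so in a small disk around the tangency $a$ stays on one fixed side of $b$. Hence traversing $a$ never changes the ambient face. The symmetric conclusion holds for every $b \in B$ with respect to the arrangement of $A$.

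Next I would perform a pigeonhole reduction on local orientations. Orient each edge arbitrarily, and for every tangency $(a,b)$ record the side of $a$ on which $b$ sits locally, producing a $2$-coloring of $A \times B$ (together with the symmetric coloring based on $b$). A bipartite-Ramsey / iterated pigeonhole argument then yields sub-families $A' \subseteq A$ and $B' \subseteq B$ of substantial size on which all tangencies carry the same local signature, so that after a suitable local homeomorphism every curve of $B'$ sits on a prescribed side of every $a \in A'$ near each tangency, and vice versa. In this cleaned configuration, $B'$ is an arrangement of pseudo-segments (each pair sharing exactly one point by the tangled-thrackle hypothesis), and by the previous step each $a \in A'$ lies in the closure of some face of this arrangement while touching each edge of $B'$ on the boundary of that face.

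Finally, I would invoke the Davenport--Schinzel bound $\lambda_3(n)<2n\ln n + 3n$ stated above. The boundary of a single face in an arrangement of $n$ pseudo-segments has complexity $O(\lambda_3(n))$, and the cyclic sequence of labels read along that boundary is a DS sequence of order $3$. Pigeonholing over the faces of the arrangement of $B'$ concentrates many edges of $A'$ in a common face $F$; each such $a$ contributes $|B'|$ tangencies on $\partial F$, labelled by distinct elements of $B'$, and the consistent-side structure forces these incidences to be organized in a way that produces an alternating pattern along $\partial F$ incompatible with the $\lambda_3$ bound once the parameters are large enough. The main obstacle I anticipate is the numerical calibration: the constant $200$ must be large enough that after the two pigeonhole losses (orientation cleanup, and concentration in one face) the surviving sub-families still exceed the threshold imposed by $\lambda_3$. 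Verifying this quantitative step, together with carefully handling edges of $A'$ that share a common endpoint (which behave differently than proper tangencies), is where the precise choice of $200$ is forced.
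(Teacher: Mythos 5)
Your proposal does not address the statement in question. The statement to be proved is the purely combinatorial bound $\lambda_3(n) < 2n\ln n + 3n$ on the maximum length of a Davenport--Schinzel sequence of order $3$ over an $n$-letter alphabet (quoted in the paper from Proposition 7.1.1 of Matou\v{s}ek's book). What you have sketched instead is an argument for Lemma~\ref{100e} of the paper---that the perturbed graph $\tilde G$ has no two families of $200$ pairwise disjoint edges---and in your final paragraph you explicitly \emph{invoke} the bound $\lambda_3(n)<2n\ln n+3n$ as a known fact. So as a proof of the stated theorem the proposal is vacuous, and read literally it is circular: the very inequality to be established is used as an ingredient, while nothing in the argument analyzes $ababa$-free sequences at all.

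A genuine proof of this statement has a completely different character: one works directly with a sequence over an $n$-letter alphabet with no two equal consecutive symbols and no alternation $a\,b\,a\,b\,a$, and bounds its length by an induction or amortized counting argument on the alphabet size (this is what Matou\v{s}ek's Proposition 7.1.1 does, yielding the explicit constants $2$ and $3$). None of the geometric apparatus you develop---tangencies, faces of pseudo-segment arrangements, pigeonholing over local orientations---is relevant to that combinatorial claim; those ideas belong to the application (Lemma~\ref{lem:connected} and Lemma~\ref{100e}), where the paper indeed uses the face-complexity bound of Theorem~\ref{facesize} together with the $\lambda_3$ estimate. If you intended to prove Lemma~\ref{100e}, you should also note that the paper's route is somewhat different from yours: it reduces to two \emph{connected} (or single-face) subarrangements and counts tangencies via a planar bipartite incidence graph $H$ with $200^2$ edges and at most $2\lambda_3(400)$ vertices, contradicting Euler's formula, rather than reading an alternating sequence along one face boundary.
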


A set $\mathcal L$ of $m$ Jordan arcs in the plane is called an \emph{arrangement of pseudo-segments} if each pair of arcs intersects in at most one point (at an endpoint, a crossing, or a point of tangency), and no three arcs have a common interior point. An arrangement of pseudo-segments naturally defines a plane graph: The \emph{vertices} of the arrangement are the endpoints and the intersection points of the Jordan arcs, and the \emph{edges} are the portions of the Jordan arcs between consecutive vertices. The \emph{faces} of the arrangement are the connected components of the complement of the union of the Jordan arcs. The vertices and edges are said to be \emph{incident to} a face if they are contained in the (topological) closure of that face. The following theorem is a particular case of Theorem 5.3 of \cite{micha}.

\begin{theorem}[see~\cite{micha}]\label{facesize}
Let $\mathcal L$ be an arrangement of $m$ pseudo-segments and $F$ be a face of $\mathcal L$.
Then number of edges incident to $F$ is at most $\lambda_3(2m)$.
\end{theorem}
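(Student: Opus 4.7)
Suppose for contradiction that $\tilde G$ contains $200$-element sets $A,B\subseteq E(\tilde G)$ with every edge of $A$ disjoint from every edge of $B$ in $\tilde G$. By the construction of $\tilde G$ from the tangled-thrackle $G$, this means that in $G$ every pair $(a,b)\in A\times B$ is tangent, and in particular $a$ and $b$ share no endpoint and do not cross. I plan to derive a contradiction by analyzing the pseudo-segment arrangement $\mathcal A$ formed by the $200$ arcs in $A$ together with its symmetric counterpart $\mathcal B$ for $B$.

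The first step is a confinement observation: for each $b\in B$, the arc $b$ is contained in the closure of a single face $F_b$ of $\mathcal A$, and all $200$ of its tangencies with $A$ lie on $\partial F_b$. Indeed, $b$ does not cross any element of $A$ (only tangencies), so deleting $b$'s $200$ tangent points decomposes $b$ into open sub-arcs, each contained in some face of $\mathcal A$; at each tangency with some $a_i\in A$, the arc $b$ stays on a fixed local side of $a_i$, so the two sub-arcs meeting at that tangency lie in the same face. Since the $200$ tangencies of $b$ involve $200$ distinct elements of $A$, every arc in $A$ contributes at least one sub-arc to $\partial F_b$. By Theorem \ref{facesize} applied to $\mathcal A$, $|\partial F_b|\leq \lambda_3(2|A|)=\lambda_3(400)<2\cdot 400\ln 400+3\cdot 400<6000$. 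By the same argument on $\mathcal B$, each $a\in A$ lies in a single face $F_a$ of $\mathcal B$ whose boundary contains all $200$ arcs of $B$ and has size at most $\lambda_3(400)<6000$.

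The remainder of the proof is a double-counting and pigeonhole argument combining both restrictions. In $\mathcal A$, the number of faces whose boundary contains all $200$ arcs of $A$ is at most $2|A|$, since each such ``rich'' face contributes at least $|A|$ to $\sum_F|\partial F|=2|E(\mathcal A)|\leq 2|A|^2$. The $200$ arcs of $B$ are distributed among such rich faces, and symmetrically the $200$ arcs of $A$ are distributed among the analogous rich faces of $\mathcal B$. Cross-counting the $|A|\cdot|B|=40000$ tangent points through both arrangements, together with the explicit bound $\lambda_3(400)<6000$, should yield a numerical contradiction at the threshold $|A|=|B|=200$.

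The main obstacle is precisely this final combinatorial step. A single application of Theorem \ref{facesize} (to $\mathcal A$ or to $\mathcal B$) has far too much slack: $|\partial F_b|\leq 6000$ is well above the trivial lower bound $|\partial F_b|\geq 200$, and a single pigeonhole on rich faces only forces some face to contain $\Omega(1)$ arcs of $B$. The contradiction must emerge from a more refined inequality that balances the two arrangements simultaneously, or from an iterated pigeonhole that exploits the DS-$3$ structure of the face boundaries more carefully. Pinning down this exact inequality—and verifying it against the explicit $\lambda_3$-bound—is the crux of the proof.
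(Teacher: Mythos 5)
Your proposal does not address the statement it was supposed to prove. The statement is Theorem~\ref{facesize} itself: the bound $\lambda_3(2m)$ on the number of edges of an arrangement of $m$ pseudo-segments incident to a single face. This is a face-complexity result from the theory of Davenport--Schinzel sequences (the paper simply cites it from Sharir--Agarwal, Theorem 5.3), and its standard proof has nothing to do with thrackles: one traverses the boundary of the face $F$, records which pseudo-segment is being followed (splitting each of the $m$ arcs into two oriented sides, hence an alphabet of size $2m$), and verifies that the resulting sequence contains no alternation of type $a\,b\,a\,b\,a$, because two pseudo-segments intersect at most once; the boundary length is therefore at most $\lambda_3(2m)$. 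Instead, you attempt to prove Lemma~\ref{100e} (that $\tilde G$ has no two $200$-element families of mutually disjoint edges), and in doing so you explicitly invoke Theorem~\ref{facesize} as a black box. Relative to the assigned statement this is circular: you are using the very theorem you were asked to establish.

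Even judged as an attempt at Lemma~\ref{100e}, the argument has a genuine gap, which you acknowledge yourself: the final combinatorial step is missing, and the double counting of ``rich faces'' you sketch will not close it, since, as you note, a bound of roughly $6000$ on face complexity against a trivial lower bound of $200$ leaves far too much slack. The paper's proof of Lemma~\ref{lem:connected} gets its contradiction from a different ingredient that your outline never uses: planarity of the tangency graph $H$. One places a vertex on each boundary edge of the relevant face of each arrangement and joins two vertices when the corresponding edges are tangent; each such edge of $H$ can be drawn along the two arcs to their point of tangency without crossings, so $H$ is planar and satisfies $|E(H)|\leq 3|V(H)|$ by Euler's formula. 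Since $|E(H)|=200^2$ while $|V(H)|\leq 2\lambda_3(400)$, this is the numerical contradiction. Your confinement observation (each arc of $B$ stays in the closure of a single face of the arrangement of $A$, since tangencies do not let it switch sides) is fine, but it does not by itself force all arcs of $B$ into the \emph{same} face, which is why the paper either assumes connectedness (Lemma~\ref{lem:connected}) or runs the halving/pigeonhole argument of Proposition~\ref{lem:disconnected}; your sketch does not supply a substitute for either step.
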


\begin{figure}
  \centering
    \includegraphics[width=.7\textwidth]{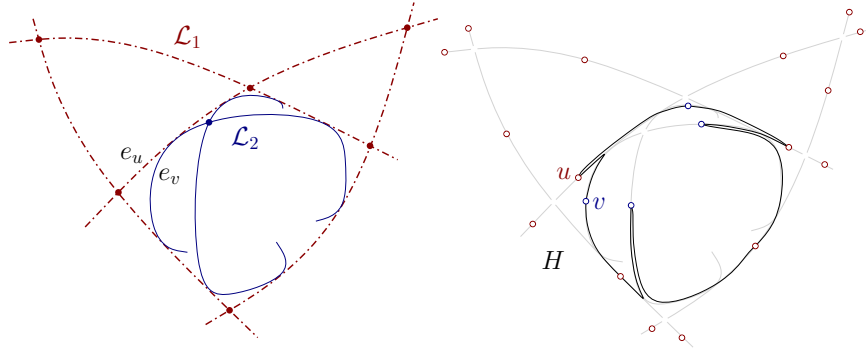}
          \caption{Constructing $H$ from $\mathcal L_1$ and $\mathcal L_2$.}\label{fig:tangencies}
\end{figure}

\begin{lemma}\label{lem:connected}
Let $\mathcal L_1\cup \mathcal L_2$ be an arrangement of pseudo-segments such that every arc in $\mathcal L_1$ is tangent to all arcs in $\mathcal L_2$; and $\mathcal L_1$ and $\mathcal L_2$ each form a connected arrangement. Then $\mathcal L_1$ or $\mathcal L_2$ contains at most 200 arcs. \end{lemma}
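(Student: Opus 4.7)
First I would show that $\mathcal L_2$ lies in the closure of a single face $F$ of the arrangement of $\mathcal L_1$. Since every pair in $\mathcal L_1 \times \mathcal L_2$ meets only at a tangency and never a crossing, no arc of $\mathcal L_2$ crosses $\bigcup \mathcal L_1$, so each arc of $\mathcal L_2$ sits in the closure of one face of the arrangement of $\mathcal L_1$. Two arcs of $\mathcal L_2$ that share a common point must share it off $\bigcup \mathcal L_1$ (else three arcs of $\mathcal L_1 \cup \mathcal L_2$ would meet at that point, contradicting the arrangement condition), so they lie in the same face. The connectedness of $\bigcup \mathcal L_2$ then gives $\mathcal L_2 \subseteq \overline F$ for a single face $F$; symmetrically $\mathcal L_1 \subseteq \overline{F'}$ for a face $F'$ of the arrangement of $\mathcal L_2$.

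Write $m_i = |\mathcal L_i|$. Theorem~\ref{facesize} bounds the number of arrangement edges incident to $F$ by $\lambda_3(2m_1) < 4 m_1 \ln(2m_1) + 6 m_1$. All $m_1 m_2$ tangency points $p_{ij}$ lie on $\partial F$: each $p_{ij} \in \ell_i$ is contained in $\overline F$ (since $\ell_j' \subseteq \overline F$ touches $\ell_i$ at $p_{ij}$) and lies on $\bigcup \mathcal L_1$, hence on $\partial F$. The same holds for $\partial F'$. Walking once around $\partial F$ and recording at each tangency the index of the corresponding arc in $\mathcal L_2$ yields a sequence $S$ of length $m_1 m_2$ over an alphabet of size $m_2$. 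The central claim I would prove is that $S$ is a Davenport--Schinzel sequence of bounded order, ideally order $3$: if two arcs $\ell_j', \ell_k' \in \mathcal L_2$ alternated too often along $\partial F$, the fact that $\ell_j', \ell_k' \subseteq \overline F$ share exactly one point in $F$ and neither crosses $\partial F$ should force them to intersect more than once, contradicting the pseudo-segment condition. Granted the claim, $m_1 m_2 \leq \lambda_3(m_2) \leq 2 m_2 \ln m_2 + 3 m_2$, so $m_1 = O(\log m_2)$; the symmetric bound along $\partial F'$ gives $m_2 = O(\log m_1)$; iterating these forces $\min(m_1, m_2)$ to be bounded by a small absolute constant, well under $200$. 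As an alternative route, one could construct the auxiliary planar graph $H$ of Figure~\ref{fig:tangencies} (with one vertex per arc and one edge per tangency, routed using the containments in $F$ and $F'$) and combine Euler's formula for $H$ with the $\lambda_3$ face-complexity bound to reach the same conclusion.

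The hard part will be establishing the bounded Davenport--Schinzel order of $S$ (equivalently, the near-planarity of $H$). Proving this requires a careful topological argument exploiting that each pair of arcs in $\mathcal L_2$ shares exactly one point, each is tangent to every arc of $\mathcal L_1$, and neither crosses $\partial F$; too many $jkjk$-type alternations along $\partial F$ should produce a Jordan-curve obstruction forcing an additional intersection of $\ell_j'$ and $\ell_k'$, which the pseudo-segment hypothesis rules out. Once this is in hand, the two symmetric logarithmic inequalities in $m_1$ and $m_2$ close up rapidly and give considerable slack to reach the constant $200$ required by the lemma.
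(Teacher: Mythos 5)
Your setup is right, and it matches the paper's: since no arc of $\mathcal L_1$ crosses an arc of $\mathcal L_2$ and $\mathcal L_2$ is connected, $\mathcal L_2$ lies in the closure of a single face $F$ of $\mathcal L_1$ (and symmetrically for $F'$). The paper then goes down your ``alternative route'': it places one vertex of an auxiliary plane graph $H$ on each arrangement edge of $\mathcal L_1$ incident to $F_1$ and each arrangement edge of $\mathcal L_2$ incident to $F_2$, joins two such vertices whenever the corresponding edges are tangent, observes $|E(H)|=200^2$ while Theorem~\ref{facesize} gives $|V(H)|\le 2\lambda_3(400)$, and then $|E(H)|/|V(H)|>3.3$ contradicts planarity via Euler's formula. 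That argument is short, self-contained, and is the one you should have developed.

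Your primary route, by contrast, has a genuine gap. The central claim---that the sequence $S$ of $\mathcal L_2$-labels recorded at tangency points along $\partial F$ is a Davenport--Schinzel sequence of order $3$ (or even of bounded order)---is asserted but never proved, and there are two concrete problems with it. First, a Davenport--Schinzel sequence by definition has no two consecutive equal symbols, but $S$ certainly may: nothing prevents two consecutive tangency points along $\partial F$ from involving the same arc of $\mathcal L_2$ (e.g.\ a single arc of $\mathcal L_2$ touching two consecutive edges of $\partial F$ with no other tangency in between). After collapsing consecutive repeats you only get $|S'|\le\lambda_3(m_2)$ for the collapsed sequence $S'$, and $|S'|\le|S|=m_1m_2$ gives no information in the direction you need; the inequality $m_1m_2\le\lambda_3(m_2)$ simply does not follow. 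Second, even the alternation bound itself needs a careful Jordan-curve argument that accounts for the possibility that the cyclic order of an arc's tangency points along $\partial F$ differs from their order along the arc, which you do not address. Note also that you invoke Theorem~\ref{facesize} to bound the number of arrangement edges on $\partial F$ by $\lambda_3(2m_1)$ but then never use that bound; it is a statement about edges of the arrangement $\mathcal L_1$, not about the label sequence of $\mathcal L_2$-tangencies, so it does not directly give you what you want. The role of Theorem~\ref{facesize} in the correct proof is to bound $|V(H)|$, after which the counting goes through the plane graph $H$ rather than through any sequence attached to the tangency points.
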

\begin{proof} Refer to Figure \ref{fig:tangencies}.
Suppose to the contrary, that both $\mathcal L_1$ and $\mathcal L_2$ contain at least 200 arcs.
Without loss of generality, we may assume $|\mathcal L_1|=|\mathcal L_2|=200$.
Since no arc in $\mathcal L_1$ crosses any arc in $\mathcal L_2$, the arrangement $\mathcal L_1$ lies in the closure of a single face $F_2$ of the arrangement $\mathcal L_2$, and vice versa $\mathcal L_2$
lies in the closure of a single face $F_1$ of the arrangement $\mathcal L_1$. We construct a plane graph $H$ representing the tangencies between the edges of the two arrangements: place a vertex on the relative interior of each edge of $\mathcal L_1$ incident to $F_1$ and each edge of $ \mathcal L_2$ incident to $F_2$. Join two vertices, $v$ and $u$, by an edge iff their corresponding edges of the arrangements, $e_u$ and $e_v$, are tangent to each other. To see that $H$ is indeed planar, note that each edge $uv$ can be drawn closely following the arcs $e_u$ and $e_v$ to their intersection point in such a way that $H$ has no crossings. As every arc in $\mathcal L_1$ is tangent to all arcs in $\mathcal L_2$, the graph $H$ has exactly $200^2$ edges. By Theorem \ref{facesize}, $H$ has at most $2\lambda_3(400)$ vertices.  However,
$$ \frac{|E(H)|}{|V(H)|}
\geq \frac{200^2}{2\lambda_3(400)}
>\frac{200^2}{4\cdot 400 \ln 400 +6\cdot 400}
> 3.3,$$
which contradicts Euler's formula.
\end{proof}

Note that Lemma~\ref{lem:connected} easily generalizes to the case when the arrangements $\mathcal L_1$ and $\mathcal L_2$ are not necessarily connected but they have the property that every pseudo-segment in $\mathcal L_1$ lies in the same face of $\mathcal L_2$, and vice versa.

It is now easy to see that Lemma~\ref{100e} follows directly from Lemma~\ref{lem:connected}: if 200 edges of the simple topological graph $\tilde{G}$ are disjoint from another set of 200 edges of  $\tilde{G}$, then a set of the corresponding 200 edges of the tangled-thrackle $G$ are tangent to corresponding other set of 200 edges of $G$.

\medskip

\noindent\textbf{Proof of Theorem \ref{thm:thrackle}:}  The statement follows by combining Theorem \ref{thm:main} and Lemma \ref{100e}.$\hfill\square$

\medskip

We now show an analogue of Lemma \ref{lem:connected} where we drop the condition that the arrangements $\mathcal L_1$ and $\mathcal L_2$ are connected.  We find this interesting for its own sake.
\begin{proposition}\label{lem:disconnected}
Let $\mathcal L_1\cup \mathcal L_2$ be an arrangements of pseudo-segments such that every arc in $\mathcal L_1$ is tangent to all arcs in $\mathcal L_2$. Then $\mathcal L_1$ or $\mathcal L_2$ contains at most 800 arcs.
\end{proposition}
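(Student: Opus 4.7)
My plan is to reduce Proposition~\ref{lem:disconnected} to the generalization of Lemma~\ref{lem:connected} stated in the remark following it, by carefully partitioning $\mathcal L_1$ and $\mathcal L_2$ according to face containment.

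Since tangent arcs do not cross, each arc of $\mathcal L_1$ lies inside a single face of the arrangement of $\mathcal L_2$, and symmetrically each arc of $\mathcal L_2$ lies in a single face of the arrangement of $\mathcal L_1$. Write $\mathcal L_1^F \subseteq \mathcal L_1$ for the arcs contained in a face $F$ of $\mathcal L_2$, and $\mathcal L_2^G \subseteq \mathcal L_2$ analogously. Fix any face $F$ with $\mathcal L_1^F \neq \emptyset$. The subarrangement $\mathcal L_1^F$ sits entirely inside $F$, so every arc of $\mathcal L_2$ (which cannot enter the interior of $F$) lies in the single outer face of $\mathcal L_1^F$. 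Hence $(\mathcal L_1^F, \mathcal L_2)$ satisfies the hypothesis of the generalized Lemma~\ref{lem:connected}, yielding $\min(|\mathcal L_1^F|, |\mathcal L_2|) \leq 200$. Assuming $|\mathcal L_2| > 200$, this forces $|\mathcal L_1^F| \leq 200$ for every face $F$ of $\mathcal L_2$; symmetrically, $|\mathcal L_2^G| \leq 200$ for every face $G$ of $\mathcal L_1$, provided $|\mathcal L_1| > 200$.

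To conclude $|\mathcal L_1| \leq 800$ or $|\mathcal L_2| \leq 800$, the remaining task is to bound by $4$ the number of faces of $\mathcal L_2$ that contain an arc of $\mathcal L_1$ (and symmetrically for $\mathcal L_1$), which combined with the previous step yields $|\mathcal L_1| \leq 4 \cdot 200 = 800$. Note that any face $F$ of $\mathcal L_2$ containing an arc of $\mathcal L_1$ is necessarily \emph{covering} in the sense that every arc of $\mathcal L_2$ contributes at least one edge to $\partial F$: the $\mathcal L_1$-arc living in $F$ must realize its tangency with each arc of $\mathcal L_2$, and every such tangency point lies on $\partial F$.

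The main obstacle is this covering-faces bound. Heuristically, such faces behave like the ``inside'' and ``outside'' determined by the arcs, so only a small constant number should exist. To make this precise, the plan is to combine the Davenport--Schinzel bound $|\partial F| \leq \lambda_3(2|\mathcal L_2|)$ from Theorem~\ref{facesize} with a global Euler-type argument applied to a single planar auxiliary graph whose vertex set aggregates the edges along the boundaries of all covering faces simultaneously and whose edges encode tangencies, in the spirit of the graph $H$ constructed in the proof of Lemma~\ref{lem:connected}.
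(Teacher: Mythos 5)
Your approach is genuinely different from the paper's, but it has two gaps, one of which you acknowledge and one of which you do not.

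The unacknowledged gap is in the reduction to the generalized Lemma~\ref{lem:connected}. You fix a face $F$ of the $\mathcal L_2$-arrangement and claim that, because $\mathcal L_1^F$ sits inside $\overline F$ and the arcs of $\mathcal L_2$ avoid the interior of $F$, every arc of $\mathcal L_2$ must lie in ``the single outer face'' of the subarrangement $\mathcal L_1^F$. This implicitly assumes that $\mathbb R^2 \setminus F$ meets only one face of $\mathcal L_1^F$, which is not automatic. A face of a pseudo-segment arrangement need not be simply connected; if $F$ is, say, an annular face, then $\mathbb R^2 \setminus \overline F$ has two components, and several arcs of $\mathcal L_1^F$ can jointly separate the ``hole'' of $F$ from the unbounded region. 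In that case arcs of $\mathcal L_2$ hugging the inner boundary of $F$ and arcs hugging the outer boundary could sit in distinct faces of $\mathcal L_1^F$, and the hypothesis of the generalized lemma would fail. You would need an additional argument ruling this out, and none is given.

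The acknowledged gap is the claim that at most $4$ faces of $\mathcal L_2$ contain arcs of $\mathcal L_1$. You correctly flag this as the ``main obstacle'' and only sketch a plan (Davenport--Schinzel bounds plus a global Euler-type count on an aggregate tangency graph), without carrying it out. As stated it is not clear why the bound should be $4$ rather than some other constant, nor that such a bound holds at all; the constant $800 = 4\cdot 200$ appears to be reverse-engineered from the target rather than derived.

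For comparison, the paper avoids both issues entirely by never trying to control all faces at once. It fixes a single arc $\ell \in \mathcal L_1$, splits it at its median arrangement-vertex into two sub-arcs, and by pigeonhole picks a half $\ell_a$ tangent to at least $400$ arcs of $\mathcal L_2$ while intersected by at most $400$ arcs of $\mathcal L_1$. The $400$ arcs of $\mathcal L_1$ disjoint from $\ell_a$ form a subarrangement in which $\ell_a$ lies in a single face, forcing the $400$ tangent $\mathcal L_2$-arcs into that same face. One more round of the same halving (this time starting from an arc of the reduced $\mathcal L_2$) produces $200$-element subfamilies on each side with the mutual one-face property, contradicting the generalized Lemma~\ref{lem:connected}. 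This ``halve-and-restrict'' argument is why the constant is $800 = 2^2\cdot 200$, and it sidesteps any topological subtleties about non-simply-connected faces. If you want to salvage your decomposition-by-faces route, you would at minimum need to (i) handle non-simply-connected faces when applying the generalized lemma, and (ii) actually prove a constant bound on the number of ``occupied'' faces; at that point the paper's two-round pigeonhole is likely shorter.
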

\begin{proof}
Suppose to the contrary, that both $\mathcal L_1$ and $\mathcal L_2$ contain at least 800 arcs.
Without loss of generality, we may assume $|\mathcal L_1|=|\mathcal L_2|=800$.
The difference from Lemma \ref{100e} is that the arrangement $\mathcal L_1$ or $\mathcal L_2$ may not be connected, and so the arcs in $\mathcal L_1$ could be distributed in several faces of the arrangement $\mathcal L_2$.

Consider an arc $\ell\in \mathcal L_1$, and denote by $s$ the number of other arcs in $\mathcal L_1$ that intersect $\ell$, where $0\leq s\leq 799$. Then $\ell$ contains precisely $s+1$ edges of the arrangement $\mathcal L_1$. Partition $\ell$ into two Jordan arcs: $\ell_a\subset\ell$ consists of the first $\lfloor (s+1)/2\rfloor$ edges along $\ell$, and $\ell_b=\ell\setminus \ell_a$.
Recall that $\ell$ is tangent to all 800 arcs in $\mathcal L_2$. By the pigeonhole principle, we may assume w.l.o.g. that $\ell_a$ is tangent to at least 400 arcs in $\mathcal L_2$. Let $\mathcal L_2'\subset \mathcal L_2$ be a set of 400 arcs in $\mathcal L_2$ that are tangent to $\ell_a$.
By construction, $\ell_a$ intersects at most $\lceil s/2\rceil\leq 400$ arcs in $\mathcal L_1$. Consequently, there is a set $\mathcal L_1'\subset \mathcal L_1$ of 400 arcs in $\mathcal L_1$
that do \emph{not} intersect $\ell_a$. Observe that $\ell_a$ lies in a single face of the arrangement
$\mathcal L_1'$. Since every arc in $\mathcal L_2'$ intersects $\ell_a$, all arcs in $\mathcal L_2'$ lie in the same face of the arrangement $\mathcal L_1'$.

Applying the same procedure to $\mathcal L_2'$ and $\mathcal L_1'$ we may now find sets $\mathcal L_2''\subset \mathcal L_2'$ and $\mathcal L_1''\subset \mathcal L_1'$ each of size 200 such that all the arcs of $\mathcal L_1''$ lie in the same face of the arrangement $\mathcal L_2''$

We have found subsets $\mathcal L_1''\subset \mathcal L_1$ and $\mathcal L_2''\subset \mathcal L_2$
of size $|\mathcal L_1''|=|\mathcal L_2''|=200$ such that every arc in $\mathcal L_1''$ is tangent to all arcs in $\mathcal L_2''$; and all arcs of $\mathcal L_1'$ lie in the same face of $\mathcal L_2'$ and vice versa. This contradicts Lemma~\ref{lem:connected} and the remark following its proof.
\end{proof}

\section{Concluding remarks}

1. We showed that for every integer $t$, the maximum number of edges in a simple topological graph with $n$ vertices that does not contain $t$ edges all disjoint from another set of $t$ edges is $cn$, where $c = c(t)$.  A careful analysis of the proof shows that $c = 2^{O(t\log t)}$.  It would be interesting to see if one could improve the upper bound on $c$ to $O(t)$.

\medskip

\noindent 2.  We suspect that the bounds of 200 and 800 in Lemma~\ref{lem:connected} and Proposition~\ref{lem:disconnected} are not optimal. Since any constant bound yields a linear upper bound for the number of edges in  tangled-thrackles, we have not optimized these values. However, finding the best possible constants, or shorter proofs for some arbitrary constant bounds, would be of interest.

\paragraph{Acknowledgments.} We would like to thank G\'abor Tardos for suggestions that helped simplify the main proof.  We also thank the anonymous referees for helpful comments.

\end{document}